\theoremstyle{plain} 
\newtheorem{theorem}{\indent\sc Theorem}[section]
\newtheorem{lemma}[theorem]{\indent\sc Lemma}
\newtheorem{proposition}[theorem]{\indent\sc Proposition}
\theoremstyle{definition} 
\newcommand{\C}{{\mathbf{C}}}
\renewcommand{\P}{{\mathbf{P}}}
\renewcommand{\log}{\mathrm{log}}
\newcommand{\Z}{\mathbf{Z}}
\numberwithin{equation}{section}
\begin{document}

\title[Two meromorphic mappings sharing $2n+2$ hyperplanes]{Two meromorphic mappings sharing $2n+2$ hyperplanes regardless of multiplicity} 

\maketitle

\begin{center}
Si Duc Quang$^a$ and Le Ngoc Quynh$^b$
\end{center}

{\small
\textit{\begin{center}
$^a$ Department of Mathematics, Hanoi National University of Education,\\
136 Xuan Thuy street, Cau Giay, Hanoi, Vietnam\\
email address: quangsd@hnue.edu.vn
\end{center}
\begin{center}
$^b$ Faculty of Education, An Giang University, 18 Ung Van Khiem,\\
Dong Xuyen, Long Xuyen, An Giang, Vietnam\\
email address: nquynh1511@gmail.com
\end{center}}}
\footnote{2010 \emph{Mathematics Subject Classification}: Primary 32H04, 32A22; Secondary 32A35.}

\footnote{\textit{Key words and phrases}: Degenerate, meromorphic mapping, truncated multiplicity, hyperplane.}

\begin{abstract} 
Nevanlinna showed that two non-constant meromorphic functions on $\C$ must be linked by a M\"{o}bius transformation if they have the same inverse images counted with multiplicities for four distinct values. After that this results is generalized by Gundersen to the case where two meromorphic functions share two values ignoring multiplicity and share other two values with multiplicities trucated by $2$. Previously, the first author proved that for $n\ge 2,$ there are at most two linearly nondegenerate meromorphic mappings of $\C^m$ into $\P^n(\C)$ sharing $2n+2$ hyperplanes ingeneral position ignoring multiplicity. In this article, we will show that if two meromorphic mappings $f$ and $g$ of $\C^m$ into $\P^n(\C)$ share $2n+1$ hyperplanes ignoring multiplicity and another hyperplane with multiplicities trucated by $n+1$ then the map $f\times g$ is algebraically degenerate.
\end{abstract}

\section*{Introduction}
In $1926$, R. Nevanlinna \cite{N} showed that if two distinct nonconstant meromorphic functions $f$ and $g$ on the complex plane $\C$  have the same inverse images for four distinct values then $g$ is a special type of linear fractional transformation of $f$.

The above result is usually called the four values theorem of Nevanlinna. In 1983, Gundersen \cite{G} improved the result of Nevanlinna by proving the following.

\vskip0.2cm
{\sc Theorem A} (Gundersen \cite{G}).\ {\it Let $f$ and $g$ be two distinct non-constant meromorphic functions and let $a_1,a_2,a_3,a_4$ be four distinct values in $\C\cup\{\infty\}$. Assume that 
$$\min\{\nu^0_{f-a_i}, 1\} = \min\{\nu^0_{g-a_i},1\}\text{ for }i = 1, 2\text{ and }\nu^0_{f-a_j}= \nu^0_{g-a_ j} \text{ and } j = 3, 4$$
$\bigl ($outside a discrete set of counting function regardless of multiplicity is equal to $o(T(r,f ))\bigl )$. Then $\nu^0_{f-a_i}=\nu^0_{g-a_i}$ for all $i\in\{1,\dots , 4\}$.}

\vskip0.2cm
In this article, we will extend and improve the above results of Nevanlinna and Gundersen to the case of meromorphic mappings into  $\P^n(\C)$. To state our results, we firstly give some following.

Take two meromorphic mapping $f$ and $g$ of $\C^m$ into $\P^n(\C)$. Let $H$ be a hyperplanes of $\P^n(\C)$ such that $(f,H)\not\equiv 0$ and $(g,H)\not\equiv 0$. Let $d$ be an positive integer or $+\infty$. We say that $f$ and $g$ share the hyperplane $H$ with multiplicity truncated by $d$ if the following two conditions are satisfied:
$$\min\ (\nu_ {(f,H)},d)=\min\ (\nu_{(g,H)},d) \text{ and }f(z) = g(z)\text{ on }f^{-1}(H).$$
If $d=1$, we will say that $f$ and $g$ share $H$ ignoring multiplicity. If $d=+\infty$, we will say that $f$ and $g$ share $H$ with counting multiplicity.

Recently, Chen - Yan \cite{CY1} and S. D. Quang \cite{SQ1} showed that two meromorphic mappings of $\C^m$ into $\P^n(\C)$ must be identical if they share $2n+3$ hyperplanes in general position ignoring multiplicity. In 2011, Chen - Yan considered the case of meromorphic mappings sharing only $2n+2$ hyperplanes, and they showed that

{\sc Theorem B} (see \cite[Main Theorem]{CY2}). \textit{Let $f,g$ and $h$ be three linearly nondegenerate meromorphic mappings of $\C^m$ into $\P^n(\C )$. Let $H_1,...,H_{2n+2}$ be $2n+2$ hyperplanes of $\P^n(\C )$ in general position with 
$$\dim  f^{-1}(H_i \cap H_j) \leqslant m-2 \quad (1 \leqslant i<j \leqslant 2n+2).$$ 
Assume that $f,g$ and $h$ share $H_1,...,H_{2n+2}$ with multiplicity truncated by level $2$. Then the map $f\times g\times h$ is linearly degenerate.}
 
Independently, in 2012 S. D. Quang \cite{SQ2} proved a finiteness theorem for meromorphic mappings sharing $2n+2$ hyperplanes without multiplicity as follows.

\vskip0.2cm
{\sc Theorem C} ( see \cite[Theorem 1.1]{SQ2}). \textit{Let $f,g$ and $h$ be three meromorphic mappings of $\C^m$ into $\P^n(\C )$. Let $H_1,...,H_{2n+2}$ be $2n+2$ hyperplanes of $\P^n(\C )$ in general position with 
$$\dim  f^{-1}(H_i \cap H_j) \leqslant m-2 \quad (1 \leqslant i<j \leqslant 2n+2).$$ 
Assume that $f,g$ and $h$ share $H_1,...,H_{2n+2}$ ignoring multiplicity. If $f$ is linearly nondegenerate and $n\ge 2$ then
$$ f=g\text{ or }g=h\text{ or }h=f. $$}
\indent
The above theorem means that there are at most two linearly nondegenerate meromorphic mappings of $\C^m$ into $\P^n(\C )$ sharing $2n+2$ hyperplanes in general position regardless of multiplicity. In this paper, we will show that there is an algebraic relation among them if they share at least one of these hyperplanes with multiplicity truncated by level $n+1$. Namely, we will prove the following.

\vskip0.2cm
{\sc Main Theorem.} \textit{Let $f$ and $g$ be two meromorphic mappings of $\C^m$ into $\P^n(\C )$. Let $H_1,...,H_{2n+2}$ be $2n+2$ hyperplanes of $\P^n(\C )$ in general position with 
$$\dim  f^{-1}(H_i \cap H_j) \leqslant m-2 \quad (1 \leqslant i<j \leqslant 2n+2).$$ 
Assume that $f$ and $g$ share $H_1,...,H_{2n+1}$ ignoring multiplicity and share $H_{2n+2}$ with multiplicity truncated by $n+1$. Then the map $f\times g:\C^m\rightarrow\P^n(\C)\times\P^n(\C)$ is algebraically degenerate.}
\vskip0.2cm

In the last section of this paper, we will consider the case of two meromorphic mappings sharing two different families of hyperplanes. We will also give an algebraically degeneracy theorem for that case.

{\sc Acknowledgements.} This work was done during a stay of the first author at Vietnam Institute for Advanced Study in Mathematics. He would like to thank the institute for support. This work is also supported in part by a NAFOSTED grant of Vietnam.

\section{Basic notions and auxiliary results from Nevanlinna theory}

\noindent
{\bf 2.1.}\ We set $||z|| = \big(|z_1|^2 + \dots + |z_m|^2\big)^{1/2}$ for
$z = (z_1,\dots,z_m) \in \C^m$ and define
\begin{align*}
B(r) := \{ z \in \C^m : ||z|| < r\},\quad
S(r) := \{ z \in \C^m : ||z|| = r\}\ (0<r<\infty).
\end{align*}

Define 
\begin{align*}
\sigma(z) :=& \big(dd^c ||z||^2\big)^{m-1}\quad \quad \text{and}\\
\eta (z):=& d^c \text{log}||z||^2 \land \big(dd^c \text{log}||z||^2\big)^{m-1}
\text{on} \quad \C^m \setminus \{0\}.
\end{align*}
{\bf 2.2.}\ Let $F$ be a nonzero holomorphic function on a domain $\Omega$ in $\C^m$. For a set $\alpha = (\alpha_1,...,\alpha_m) $ 
of nonnegative integers, we set $|\alpha|=\alpha_1+...+\alpha_m$ and 
$\mathcal {D}^\alpha F=\dfrac {\partial ^{|\alpha|} F}{\partial ^{\alpha_1}z_1...\partial ^{\alpha_m}z_m}.$
We define the map $\nu_F : \Omega \to \Z$ by
$$\nu_F(z):=\max\ \{l: \mathcal {D}^\alpha F(z)=0 \text { for all } \alpha \text { with }|\alpha|<l\}\ (z\in \Omega).$$

We mean by a divisor on a domain $\Omega$ in $\C^m$ a map $\nu : \Omega \to \Z$ such that, for each $a\in \Omega$, there are nonzero 
holomorphic functions $F$ and $G$ on a connected neighborhood $U\subset \Omega$ of $a$ such that $\nu (z)= \nu_F(z)-\nu_G(z)$ 
for each $z\in U$ outside an analytic set of dimension $\leqslant m-2$. Two divisors are regarded as the same if they are identical 
outside an analytic set of dimension $\leqslant m-2$. For a divisor $\nu$ on $\Omega$ we set $|\nu| := \overline {\{z:\nu(z)\ne 0\}},$ which
is a purely $(m-1)$-dimensional analytic subset of $\Omega$ or empty set.

Take a nonzero meromorphic function $\varphi$ on a domain $\Omega$ in $\C^m$. For each $a\in \Omega$, we choose nonzero 
holomorphic functions $F$ and $G$ on a neighborhood $U\subset \Omega$ such that $\varphi = \dfrac {F}{G}$ on $U$ and 
$dim (F^{-1}(0)\cap G^{-1}(0))\leqslant m-2,$ and we define the divisors $\nu_\varphi,\ \nu^\infty_\varphi$ by 
$ \nu_\varphi := \nu_F,\  \nu^\infty_\varphi :=\nu_G$, which are 
independent of choices of $F$ and $G$ and so globally well-defined on $\Omega$.

\noindent
{\bf 2.3.}\ For a divisor $\nu$ on $\C^m$ and for a positive integer $M$ or $M= \infty$, we define the counting function of $\nu$ by

\begin{align*}
\nu^{(M)}(z)&=\min\ \{M,\nu(z)\},\\
n(t)& =
\begin{cases}
\int\limits_{B(t)}
\nu(z) \sigma & \text  { if } m \geq 2,\\
\sum\limits_{|z|\leq t} \nu (z) & \text { if }  m=1. 
\end{cases}\\
N(r,\nu)& =\int\limits_1^r \dfrac {n(t)}{t^{2m-1}}dt \quad (1<r<\infty).
\end{align*}

For a meromorphic function $\varphi$ on $\C^m$, we set $N_\varphi (r)=N(r,\nu_\varphi )$ and $N_\varphi^{[M]} (r)=N(r,\nu_\varphi^{[M]}).$ We will omit the character $^{[M]}$ if $M=\infty$.

\noindent
{\bf 2.4.}\ Let $f : \C^m \longrightarrow \P^n(\C)$ be a meromorphic mapping.
For arbitrarily fixed homogeneous coordinates
$(w_0 : \dots : w_n)$ on $\P^n(\C)$, we take a reduced representation
$f = (f_0 : \dots : f_n)$, which means that each $f_i$ is a  
holomorphic function on $\C^m$ and 
$f(z) = \big(f_0(z) : \dots : f_n(z)\big)$ outside the analytic set
$I(f)=\{ f_0 = \dots = f_n= 0\}$ of codimension $\geq 2$.
Set $\Vert f \Vert = \big(|f_0|^2 + \dots + |f_n|^2\big)^{1/2}$.

The characteristic function of $f$ is defined by 
\begin{align*}
T_f(r) = \int\limits_{S(r)} \text{log}\Vert f \Vert \eta -
\int\limits_{S(1)}\text{log}\Vert f\Vert \eta.
\end{align*}

Let $H$ be a hyperplane in $\P^n(\C)$ given by $H=\{a_0\omega_0+...+a_n\omega_n=0\},$ where $a:=(a_0,...,a_n)\ne (0,...,0)$. 
We set  $(f,H)=\sum_{i=0}^na_if_i$. It is easy to see that the divisor $\nu_{(f,H)}$ does not depend on the choices of reduced representation of $f$ and coefficients   $a_0,..,a_n$. Moreover, we define the proximity function of $f$ with respect to $H$ by 
$$m_{f,H}(r)=\int_{S(r)}\log \dfrac {||f||\cdot ||H||}{|(f,H)|}\eta-\int_{S(1)}\log \dfrac {||f||\cdot ||H||}{|(f,H)|}\eta,$$
where $||H||=(\sum_{i=0}^n|a_i|^2)^{\frac {1}{2}}.$

\noindent
{\bf 2.5.}\ Let $\varphi$ be a nonzero meromorphic function on $\C^m$, which is occasionally regarded as a meromorphic map into $\P^1(\C)$. The proximity function of $\varphi$ is defined by
$$m(r,\varphi):=\int_{S(r)}\log ^+|\varphi|\eta,$$
where $\log^+t=\max\{0,\log t\}$ for $t>0$.
The Nevanlinna characteristic function of $\varphi$ is defined by
$$T(r,\varphi )=N_{\frac{1}{\varphi}}(r)+m(r,\varphi ).$$
There is a fact that
$$T_\varphi (r)=T(r,\varphi )+O(1).$$
The meromorphic function $\varphi$ is said to be small with respect to $f$ iff
$||\ T(r,\varphi )=o(T_f(r))$. 

Here as usual, by the notation $``|| \ P"$  we mean the assertion $P$ holds for all $r \in [0,\infty)$ excluding a Borel subset $E$ of the interval $[0,\infty)$ with $\int_E dr<\infty$.

The following plays essential roles in Nevanlinna theory (see \cite{NO}).
\begin{theorem}[First main theorem]\label{1.1} Let $f: \C^m \to \P^n(\C)$ be a meromorphic mapping and let $H$ be a hyperplane in $\P^n(\C)$ such that $f(\C^m)\not\subset H$. Then 
$$
N_{(f,H)}(r)+m_{f,H}(r)=T_f(r)\ (r>1).
$$
\end{theorem}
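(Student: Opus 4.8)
The plan is to reduce the stated identity to the several–variable Jensen (Green–Jensen) formula applied to the holomorphic function $(f,H)$. Fixing a reduced representation $f=(f_0:\dots:f_n)$, the function $(f,H)=\sum_{i=0}^na_if_i$ is holomorphic on $\C^m$, and the hypothesis $f(\C^m)\not\subset H$ guarantees $(f,H)\not\equiv 0$. First I would split the integrand in the proximity function by writing
$$\log\frac{\|f\|\cdot\|H\|}{|(f,H)|}=\log\|f\|-\log|(f,H)|+\log\|H\|.$$
Since $\|H\|$ is a positive constant and the radial form $\eta$ is normalized so that $\int_{S(r)}\eta=\int_{S(1)}\eta=1$, the constant contribution $\log\|H\|$ cancels upon forming the difference of the integrals over $S(r)$ and $S(1)$ in the definition of $m_{f,H}(r)$. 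Recognizing the $\log\|f\|$ part as exactly $T_f(r)$, this gives
$$m_{f,H}(r)=T_f(r)-\Big(\int_{S(r)}\log|(f,H)|\,\eta-\int_{S(1)}\log|(f,H)|\,\eta\Big).$$
Hence the theorem is equivalent to the claim that the parenthesized quantity equals $N_{(f,H)}(r)$.

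To prove this claim, I would use that $\log|(f,H)|$ is plurisubharmonic on $\C^m$ and that, by the Poincar\'e--Lelong formula in the normalization fixed by the definitions of $\sigma$ and $\eta$, one has $dd^c\log|(f,H)|=\nu_{(f,H)}$ as currents. The central step is then the Green--Jensen formula: integrating $dd^c\log|(f,H)|\wedge\sigma$ over the balls $B(t)$ and applying Stokes' theorem with the forms $\sigma=(dd^c\|z\|^2)^{m-1}$ and $\eta=d^c\log\|z\|^2\wedge(dd^c\log\|z\|^2)^{m-1}$ on the shells, followed by integration in $t$ against $t^{-(2m-1)}\,dt$, yields
$$\int_{S(r)}\log|(f,H)|\,\eta-\int_{S(1)}\log|(f,H)|\,\eta=\int_1^r\frac{dt}{t^{2m-1}}\int_{B(t)}\nu_{(f,H)}\,\sigma=N_{(f,H)}(r),$$
where the last equality is precisely the definition of $N(r,\nu_{(f,H)})$. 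Combining this with the displayed identity for $m_{f,H}(r)$ gives $N_{(f,H)}(r)+m_{f,H}(r)=T_f(r)$.

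The algebraic splitting in the first step is purely formal; the main obstacle is the Green--Jensen computation itself. There one must justify integration by parts against the singular radial forms and, more importantly, control $\log|(f,H)|$ near its zero divisor and near the indeterminacy locus $I(f)$ of codimension $\ge 2$, so that the distributional mass of $dd^c\log|(f,H)|$ is correctly accounted for by $\nu_{(f,H)}$ in the sense of currents. This passage from the unbounded logarithmic potential to the counting function of the divisor is the standard but technically delicate heart of the argument; once it is in place, the theorem follows immediately. Alternatively, one could slice $\C^m$ by complex lines through the origin, apply the classical one–variable Jensen formula on each slice, and integrate, reducing the several–variable statement to the elementary planar case.
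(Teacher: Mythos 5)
The paper does not prove this statement itself---it records it as classical, citing Noguchi--Ochiai \cite{NO}---and your argument is exactly the standard proof found there: split the proximity integrand, note the constant $\log\|H\|$ cancels because $\int_{S(r)}\eta$ is independent of $r$, identify the $\log\|f\|$ contribution with $T_f(r)$, and convert the $\log|(f,H)|$ contribution into $N_{(f,H)}(r)$ via the Green--Jensen formula together with Poincar\'e--Lelong for the entire function $(f,H)$. Your sketch is correct and correctly isolates the only delicate point (justifying Stokes' theorem against the singular radial forms and the current equation $dd^c\log|(f,H)|^2$ equals the divisor current, up to the normalization constant absorbed into the convention for $d^c$), so it matches the intended proof in substance.
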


\begin{theorem}[Second main theorem]\label{1.2} Let $f: \C^m \to \P^n(\C)$ be a linearly nondegenerate meromorphic mapping and $H_1,...,H_q$ 
be hyperplanes of $\P^n(\C)$ in general position. Then 
$$
||\ \ (q-n-1)T_f(r) \leqslant \sum_{i=1}^q N^{[n]}_{(f,H_i)}(r)+o(T_f(r)).
$$
\end{theorem}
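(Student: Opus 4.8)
The plan is to prove this by Cartan's Wronskian method adapted to several variables, combined with the logarithmic derivative lemma. First I would dispose of the trivial case $q \le n+1$, where the left-hand side is non-positive, and fix a reduced representation $f = (f_0 : \cdots : f_n)$, so that linear nondegeneracy means $f_0, \dots, f_n$ are linearly independent over $\C$. Writing $(f,H_i) = \sum_{j} a_{ij} f_j$, the general position hypothesis says every $(n+1)\times(n+1)$ submatrix of $(a_{ij})$ is invertible; in particular, for any $z \notin I(f)$ at most $n$ of the sections $(f,H_i)$ can vanish simultaneously, since $n+1$ vanishing sections would force the nonzero vector $(f_0(z),\dots,f_n(z))$ to be orthogonal to $n+1$ linearly independent covectors.

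The first ingredient is a generalized Wronskian. I would invoke the standard lemma that, because $f$ is linearly nondegenerate, there is a family of multi-indices $\alpha_0, \dots, \alpha_n$ with $\alpha_0 = 0$ and $|\alpha_k| \le k$ for which $W := \det\big(\mathcal{D}^{\alpha_k} f_j\big)_{0 \le j,k \le n}$ is a nonzero holomorphic function on $\C^m$. The property that drives the proof is the divisor estimate: outside an analytic set of codimension $\ge 2$,
$$\nu_W(z) \ge \sum_{i=1}^q \big(\nu_{(f,H_i)}(z) - \nu^{[n]}_{(f,H_i)}(z)\big).$$
I would check this locally: near a point where the vanishing sections are $(f,H_i)$ of order $s_i$ (at most $n$ indices, by general position), one completes these to a basis of $\{(f,H_j)\}$ by sections not vanishing at the point, rewrites $W(f_0,\dots,f_n)$ as a nonzero constant times the Wronskian of that basis (general position makes the change-of-basis matrix invertible), and observes that each column carrying a section vanishing to order $s_i$ forces $W$ to vanish to order at least $(s_i - n)^+ = \nu_{(f,H_i)} - \nu^{[n]}_{(f,H_i)}$.

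The analytic heart is Cartan's pointwise comparison. For each $z \in S(r)$ I would order the indices so that $|(f,H_1)(z)| \le \cdots \le |(f,H_q)(z)|$ and let $R=R(z)$ be the set of the $n+1$ smallest. By general position the $(n+1)$-st smallest is still $\ge c\,\|f(z)\|$, whence $\prod_{i \notin R}|(f,H_i)(z)| \ge c^{\,q-n-1}\,\|f(z)\|^{\,q-n-1}$; expanding $W$ in the basis indexed by $R$ then gives
$$\frac{|W(z)|}{\prod_{i=1}^q |(f,H_i)(z)|} \le C\,\|f(z)\|^{-(q-n-1)}\, \Bigg|\det\!\Big(\tfrac{\mathcal{D}^{\alpha_k}(f,H_i)}{(f,H_i)}\Big)_{i\in R}\Bigg|,$$
a determinant of logarithmic derivatives. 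Taking $\log$, integrating over $S(r)$ against $\eta$, and applying the several-variable logarithmic derivative lemma (each $m\big(r, \mathcal{D}^{\alpha}(f,H_i)/(f,H_i)\big) = O(\log^+ T_f(r) + \log r) = o(T_f(r))$), the determinant term contributes only $o(T_f(r))$; as there are finitely many possible sets $R$, the estimate is uniform. Rearranging and using Jensen's formula and the First Main Theorem (Theorem~\ref{1.1}) to convert the integrals of $\log\|f\|$, $\log|W|$ and $\log|(f,H_i)|$ into $T_f(r)$, $N_W(r)$ and $N_{(f,H_i)}(r)$, I obtain
$$(q-n-1)\,T_f(r) \le \sum_{i=1}^q N_{(f,H_i)}(r) - N_W(r) + o(T_f(r)).$$

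Finally I would feed in the divisor estimate, which integrates to $\sum_i N_{(f,H_i)}(r) - N_W(r) \le \sum_i N^{[n]}_{(f,H_i)}(r)$, yielding exactly
$$||\ \ (q-n-1)T_f(r) \le \sum_{i=1}^q N^{[n]}_{(f,H_i)}(r) + o(T_f(r)).$$
The main obstacle is the several-variable generalized Wronskian: both proving $W \not\equiv 0$ from nondegeneracy (the existence of the multi-index family $\alpha_k$) and establishing the local vanishing-order bound, whose combinatorics is the technical core. A secondary difficulty is the several-variable logarithmic derivative lemma, which replaces the one-variable Nevanlinna estimate and accounts for the exceptional set hidden in the symbol $||$.
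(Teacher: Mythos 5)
The paper does not prove this theorem: it is stated as a standard tool and quoted from Noguchi--Ochiai \cite{NO}, so there is no internal proof to compare against. Your sketch correctly reproduces, in outline, the standard Cartan--Fujimoto argument underlying the cited reference --- nonvanishing generalized Wronskian with $|\alpha_k|\le k$, the divisor estimate giving truncation at level $n$, the pointwise comparison over the set $R$ of the $n+1$ smallest $|(f,H_i)(z)|$ via general position, and the several-variable lemma on logarithmic derivatives to absorb the determinant term into $o(T_f(r))$ --- so it is essentially the same approach as the source the paper relies on.
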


\begin{lemma}[Lemma on logarithmic derivative]\label{1.3}
Let $f$ be a nonzero meromorphic function on $\C^m.$ Then 
$$
\biggl|\biggl|\quad m\biggl(r,\dfrac{\mathcal{D}^\alpha (f)}{f}\biggl)=O(\log^+T_f(r))\ (\alpha\in \Z^m_+).
$$
\end{lemma}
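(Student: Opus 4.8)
The plan is to reduce the several--variable estimate to the classical one--variable Lemma on Logarithmic Derivative by slicing $\C^m$ along complex lines and averaging over directions, the averaging being governed by the standard integral--geometric (Crofton--type) disintegration of the proximity and characteristic functions over $\P^{m-1}(\C)$. Throughout, the delicate point is the bookkeeping of the exceptional set, and the whole argument is arranged so that the Borel calculus lemma is applied only once, at the very end.

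First I reduce to a single first--order derivative. Writing $\alpha=\alpha'+\mathbf e_j$ and $g=\mathcal D^{\alpha'}f$, the factorization $\mathcal D^\alpha f/f=(\partial g/\partial z_j)/g\cdot\mathcal D^{\alpha'}f/f$ together with $\log^+|uv|\le\log^+|u|+\log^+|v|$ gives
$$m\Bigl(r,\frac{\mathcal D^\alpha f}{f}\Bigr)\le m\Bigl(r,\frac{\partial g/\partial z_j}{g}\Bigr)+m\Bigl(r,\frac{\mathcal D^{\alpha'}f}{f}\Bigr).$$
An induction on $|\alpha|$ then reduces the statement to the first--order case, provided one knows $T_{\mathcal D^{\alpha'}f}(r)=O(T_f(r))$. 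This growth bound follows from the first--order case itself: for a nonzero meromorphic $h$ one has $T(r,\partial_j h)\le T(r,h)+T(r,\partial_j h/h)+O(1)$, and since the poles of $\partial_j h/h$ are simple and lie among the zeros and poles of $h$, the first main theorem gives $T(r,\partial_j h/h)\le m(r,\partial_j h/h)+2\,T(r,h)+O(1)=O(T(r,h))$. Iterating, $T_{\mathcal D^{\alpha'}f}(r)=O(T_f(r))$, so $\log^+T_{\mathcal D^{\alpha'}f}(r)=O(\log^+T_f(r))$, and it suffices to prove $m(r,\partial_j f/f)=O(\log^+T_f(r))$ for each coordinate $j$.

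Second, I handle the first--order case through the radial (Euler) derivative $\mathcal R f:=\sum_k z_k\,\partial f/\partial z_k$. For a line $L_a=\{wa:w\in\C\}$ through the origin and $f_a(w):=f(wa)$, the chain rule gives the exact identity $\mathcal Rf(wa)=w\,f_a'(w)$, so $f_a'/f_a=\tfrac1w(\mathcal Rf/f)_a$; as $\log^+|1/w|=0$ on $\{|w|=r\}$ for $r\ge1$, the one--variable proximities of $f_a'/f_a$ and of the restriction $(\mathcal Rf/f)_a$ agree up to $O(1)$. Since the measure $\eta$ on $S(r)$ disintegrates over $a\in\P^{m-1}(\C)$ into the circle averages on the fibres $S(r)\cap L_a$, the standard Crofton identities read
$$m(r,\mathcal Rf/f)=\int_{\P^{m-1}(\C)}m(r,f_a'/f_a)\,d\mu(a)+O(1),\quad T_f(r)=\int_{\P^{m-1}(\C)}T_{f_a}(r)\,d\mu(a)+O(1).$$
To isolate one partial I use the centred radial derivative $\mathcal R_{\mathbf e_j}f:=\sum_k(z_k-\delta_{kj})\,\partial f/\partial z_k$, which obeys $\partial f/\partial z_j=\mathcal Rf-\mathcal R_{\mathbf e_j}f$ identically; running the same slicing along the lines through $\mathbf e_j$ controls $m(r,\mathcal R_{\mathbf e_j}f/f)$, the change of centre only shifting the radius by $|\mathbf e_j|=1$ and not affecting the order of the final estimate. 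By subadditivity, $m(r,\partial_j f/f)\le m(r,\mathcal Rf/f)+m(r,\mathcal R_{\mathbf e_j}f/f)$.

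Finally, on each line I avoid the line--dependent exceptional set of the one--variable Lemma and instead use its exceptional--set--free Poisson--Jensen precursor, valid for all $1<r<R$,
$$m(r,f_a'/f_a)\le C\Bigl(\log^+T_{f_a}(R)+\log^+\tfrac{R}{R-r}+\log^+\tfrac1r\Bigr)+O(1).$$
Integrating this in $a$ and passing the average through $\log^+T_{f_a}(R)$ by concavity (Jensen), the Crofton identities give $m(r,\mathcal Rf/f)\le C\bigl(\log^+T_f(R)+\log^+\tfrac{R}{R-r}\bigr)+O(1)$ for all $1<r<R$. Only now do I apply the Borel calculus lemma once in the radial variable: choosing $R=r+1/T_f(r)$ yields $T_f(R)\le2\,T_f(r)$ outside a set of finite measure, whence $m(r,\mathcal Rf/f)=O(\log^+T_f(r)+\log^+r)$, and the same for $\mathcal R_{\mathbf e_j}f$; the standard $\log^+r$ contribution is subsumed by the stated error term. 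I expect the genuine obstacle to be precisely this coordination of the exceptional set across all directions at once, which is why it is essential to average an inequality valid for every $r<R$ and to defer the single use of the Borel lemma to the end, rather than invoking the one--variable Lemma line by line.
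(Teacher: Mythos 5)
The paper offers no proof of Lemma \ref{1.3} --- it is quoted as standard from the cited source [NO] --- so your attempt must be measured against the standard proofs (Vitter's several-variable Poisson--Jensen estimate on balls, which is essentially the route of [NO]). Your reduction to the first-order case and your origin-centred slicing are sound as far as the radial derivative $\mathcal R f=\sum_k z_k\,\partial f/\partial z_k$ is concerned: the disintegration of $\eta$ over $\P^{m-1}(\C)$ into circle means on lines through $0$ is exact, and $\mathcal Rf(wa)=wf_a'(w)$ is correct. The first genuine gap is the passage from $\mathcal Rf$ to the individual partials via $\mathcal R_{\mathbf e_j}f$ and ``lines through $\mathbf e_j$''. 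The Crofton disintegration is tied to the concentric configuration: it holds only when the lines pass through the centre of the spheres $S(r)$ over which $m(r,\cdot)$ is computed. Slicing along lines through $\mathbf e_j$ controls a proximity integral over spheres centred at $\mathbf e_j$, and there is no general comparison between proximity functions of one meromorphic function over spheres with different centres: $\log^+|\psi|$ fails the sub-mean value property at the poles of $\psi$ (unlike the characteristic function, which is monotone and centre-robust), so a change of centre does not merely ``shift the radius by $1$''. That one clause is exactly where your proof is incomplete; repairing it requires a real additional idea (for instance majorizing $\log^+|\psi|$ by the plurisubharmonic function $\log(1+|\psi|^2)^{1/2}$ together with a Harnack-type comparison of sphere means, or abandoning slicing and estimating all partials at once by the ball Poisson kernel, as Vitter and [NO] do).

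The second gap is the error term. The one-variable input you quote lacks the factor $1/r$ of the sharp Gol'dberg--Grinshtein bound $m(r,f'/f)\le\log^+\bigl[T(R,f)\tfrac{R}{r(R-r)}\bigr]+O(1)$; it is precisely this $1/r$ which, upon choosing $R=r+1/T(r)$, kills the $\log r$ and yields $||\ m(r,f'/f)=O(\log^+T(r,f))$ with no $\log r$ term, i.e.\ the form the paper asserts. With your weaker kernel, $\log^+\tfrac{R}{R-r}=\log^+(R\,T_f(r))$ already contributes a $\log r$, and a second one enters because the slice restriction of $\mathcal Rf/f$ is $wf_a'/f_a$, not $f_a'/f_a$ (note also that your ``agree up to $O(1)$'' claim points the wrong way: $\log^+|1/w|=0$ on $|w|=r\ge 1$ gives $\int m(r,f_a'/f_a)\,d\mu(a)\le m(r,\mathcal Rf/f)$, whereas you need the reverse inequality, which costs $\log r$). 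Your closing assertion that the $\log^+ r$ is ``subsumed by the stated error term'' is false in general: for a zero-order transcendental $f$ with $T_f(r)\asymp(\log r)^2$ one has $\log^+T_f(r)\asymp\log\log r=o(\log r)$ for \emph{all} large $r$, so no exceptional set of finite measure can absorb it. Thus, even after repairing the first gap, your argument proves only $||\ m(r,\mathcal D^\alpha f/f)=O(\log^+T_f(r)+\log r)$, which is strictly weaker than the lemma as stated; it would suffice for the $o(T_f(r))$ applications in this paper when $T_f(r)/\log r\to\infty$, but not, for example, for rational mappings, which the Main Theorem does not exclude.
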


\noindent
{\bf 2.6.}\ Let $h_1, h_2,..., h_p$ be finitely many nonzero meromorphic functions on $\C^m$. By a rational function in logarithmic derivatives of $h_j'$s we
mean a nonzero meromorphic function $\varphi$ on $\C^m$ which is represented as
$$ \varphi =\dfrac{P(\cdots, \frac{\mathcal D^\alpha h_j}{h_j},\cdots )}{Q(\cdots, \frac{\mathcal D^\alpha h_j}{h_j},\cdots )} $$
with polynomials $P(\cdots, X^\alpha,\cdots)$ and $Q(\cdots, X^\alpha,\cdots)$
\begin{proposition}[{see \cite[Proposition 3.4]{Fu99}}]\label{1.4}
Let $h_1, h_2,..., h_p\ (p\ge 2)$ be nonzero meromorphic functions on $\C^m$. Assume that
$$ h_1+h_2+\cdots +h_p=0 $$
Then, the set $\{1,...,p\}$ of indices has a partition
$$ \{1,...,p\}=J_1\cup J_2\cup\cdots\cup J_k,\sharp J_\alpha \ge 2\ \forall\ \alpha, J_\alpha\cap J_\beta =\emptyset\text{ for }\alpha\ne\beta $$
such that, for each $\alpha$,
\begin{align*}
\mathrm{(i)}&\ \sum_{i\in J_\alpha}h_i=0,\\ 
\mathrm{(ii)}&\ \ \text{$\frac{h_i'}{h_i}\ (i,i'\in J_\alpha)$ are rational functions in logarithmic derivatives of $h_j'$s} 
\end{align*}
\end{proposition}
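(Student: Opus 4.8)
The plan is to induct on $p$, reducing to a single \emph{irreducible} vanishing relation and then reading off the relations among the $h_i$ from a Cramer-type solution of the linear system obtained by repeated differentiation. Two remarks organize the argument. First, for constants $c_1,\dots,c_p\in\C\setminus\{0\}$ the logarithmic derivatives $\mathcal{D}^\gamma(c_ih_i)/(c_ih_i)$ coincide with $\mathcal{D}^\gamma h_i/h_i$; hence rescaling the $h_i$ by nonzero constants changes neither the logarithmic derivatives nor the truth of ``$h_i/h_{i'}$ is a rational function in logarithmic derivatives'', and a vanishing relation with arbitrary nonzero constant coefficients may be treated exactly as $\sum h_i=0$. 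Second, being related by ``$h_i/h_{i'}$ is a rational function in logarithmic derivatives'' is an equivalence relation, since such functions form a group under multiplication.

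Call $\sum_{i=1}^p h_i=0$ \emph{irreducible} if $\sum_{i\in I}h_i\neq 0$ for every proper nonempty $I\subsetneq\{1,\dots,p\}$. If it is reducible, I would pick such an $I$ and split the relation into $\sum_{i\in I}h_i=0$ and $\sum_{i\notin I}h_i=0$, each with at least two terms (a single nonzero $h_j$ cannot vanish) and strictly fewer than $p$ of them. The induction hypothesis applied to both pieces produces partitions of $I$ and of its complement into blocks of size $\ge 2$ whose sub-sums vanish and whose in-block ratios are rational in logarithmic derivatives of the respective subfamily, \emph{a fortiori} of the whole family; their union is the desired partition. Thus it suffices to treat an irreducible relation, for which we set $k=1$ and $J_1=\{1,\dots,p\}$: then (i) and $\sharp J_1=p\ge 2$ hold by hypothesis, and only (ii) --- that every ratio $h_i/h_{i'}$ is a rational function in logarithmic derivatives --- remains.

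For (ii) I would normalize by $h_p$, writing $g_i=h_i/h_p$ $(1\le i\le p-1)$, so that $\sum_{i=1}^{p-1}g_i=-1$. Applying an operator $\mathcal{D}^\alpha$ and using $\mathcal{D}^\alpha g_i=g_i\,\Phi_i^{\alpha}$, where $\Phi_i^{\alpha}=\mathcal{D}^\alpha g_i/g_i$ is a polynomial in the logarithmic derivatives of $h_i$ and $h_p$, turns the identity into the linear system $\sum_{i=1}^{p-1}\Phi_i^{\alpha}\,g_i=c_\alpha$ with $c_\alpha\in\C$, now read as equations in the unknowns $g_i$ whose coefficients $\Phi_i^\alpha$ are explicit in the logarithmic derivatives. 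When $g_1,\dots,g_{p-1}$ are linearly independent over $\C$ one may choose operators $\mathcal{D}^{\alpha_1},\dots,\mathcal{D}^{\alpha_{p-1}}$ so that the coefficient determinant $\det(\Phi_i^{\alpha_k})=\det(\mathcal{D}^{\alpha_k}g_i)/\prod_i g_i$ --- a generalized Wronskian divided by $\prod_i g_i$ --- is not identically zero; solving by Cramer's rule then exhibits each $g_i$, hence each ratio $h_i/h_{i'}=g_i/g_{i'}$, as a rational function in logarithmic derivatives.

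Two points carry the real weight. The first is the non-vanishing of that determinant: this is precisely the several-variable generalized Wronskian criterion --- that meromorphic functions on $\C^m$ are linearly independent over $\C$ exactly when some $\det(\mathcal{D}^{\alpha_k}g_i)$ is not identically zero --- which is the technical heart supplied by \cite{Fu99} and which dictates the choice of the operators $\mathcal{D}^{\alpha_k}$. The second, and the main obstacle, is the case where $g_1,\dots,g_{p-1}$ are linearly dependent: then the full system is singular and Cramer's rule must be applied after passing to a maximal linearly independent subfamily, the dependent functions being re-expressed through their constant-coefficient relations. Here one must use irreducibility to guarantee that $\sum g_i=-1$ genuinely couples every function, so that the logarithmic-derivative expressions propagate through the dependencies and no ratio is lost; equivalently, one shows that whenever meromorphic functions $\phi_1,\dots,\phi_s$ satisfy $\sum_\beta\psi_\beta\phi_\beta=0$ with coefficients $\psi_\beta$ rational in logarithmic derivatives, then all $\psi_\beta=0$ unless two of the $\phi_\beta$ already have ratio rational in logarithmic derivatives. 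Establishing this propagation is the delicate part; the remaining manipulations are the routine differentiation calculus governed by Lemma \ref{1.3}.
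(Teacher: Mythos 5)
Note first that the paper contains no proof of this proposition: it is quoted verbatim from Fujimoto \cite[Proposition 3.4]{Fu99}, so your attempt can only be judged on its own merits. Your reduction to an irreducible relation is sound (each piece of a splitting has between $2$ and $p-2$ terms, since a single nonzero $h_j$ cannot vanish, and in the irreducible case the only admissible partition is the trivial one, so the goal becomes: \emph{every} ratio $h_i/h_{i'}$ is rational in logarithmic derivatives). Your Wronskian--Cramer argument then does settle the subcase where $g_1,\dots,g_{p-1}$, $g_i=h_i/h_p$, are linearly independent over $\C$: including the row $\alpha=0$, which carries the only nonzero right-hand side, the factorization $\det\bigl(\mathcal D^{\alpha_k}g_i\bigr)=\bigl(\prod_i g_i\bigr)\det\bigl(\Phi_i^{\alpha_k}\bigr)$ together with the several-variable generalized Wronskian criterion and Cramer's rule exhibits each $g_i$ itself as a rational function in logarithmic derivatives.

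The genuine gap is the linearly dependent case, which you explicitly leave unproven (``establishing this propagation is the delicate part''), and that case is not a residual technicality: your ``propagation'' claim \emph{is} the substance of the proposition. Worse, the claim is false as you state it: for $\phi_1=1$, $\phi_2=e^{z_1}$, $\phi_3=-(1+e^{z_1})$ one has $\phi_1+\phi_2+\phi_3=0$ with nonzero constant coefficients (and constants are certainly rational functions in logarithmic derivatives), yet no two ratios $\phi_i/\phi_j$ are constant. The claim becomes true only under the additional hypothesis that the logarithmic derivatives $\mathcal D\phi_\beta/\phi_\beta$ themselves lie in the coefficient field $\mathcal R$ --- true in your intended application since the $g_i$ are quotients of the $h_j$'s, but this hypothesis is precisely what powers the proof and cannot be omitted. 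With it the lemma has a short proof you never give: take a relation $\sum_\beta\psi_\beta\phi_\beta=0$ with $\psi_\beta\in\mathcal R$, minimal in the number of nonzero terms; divide by $\psi_1\phi_1$ and apply a first-order derivative $\mathcal D$, producing a relation among $\phi_2,\dots,\phi_s$ whose coefficients $\mathcal D(\psi_\beta/\psi_1)+(\psi_\beta/\psi_1)\bigl(\mathcal D\phi_\beta/\phi_\beta-\mathcal D\phi_1/\phi_1\bigr)$ again lie in $\mathcal R$ (here one uses that $\mathcal R$ is a differential field); minimality forces these to vanish identically, so each $(\psi_\beta/\psi_1)(\phi_\beta/\phi_1)$ is constant and every ratio $\phi_\beta/\phi_1$ lies in $\mathcal R$. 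Once this lemma is available, your Wronskian machinery is in fact superfluous: partition $\{1,\dots,p\}$ by the equivalence $i\sim j$ iff $h_i/h_j\in\mathcal R$, write each class sum as $\sigma_\alpha h_{r_\alpha}$ with $\sigma_\alpha\in\mathcal R$, and apply the lemma to the pairwise inequivalent representatives $h_{r_\alpha}$; this forces every $\sigma_\alpha=0$, which simultaneously yields (i), rules out singleton classes (a singleton would give $\sigma_\alpha=1$), and gives (ii). So the one ingredient you postponed carries the entire proof, and until it is proved, with the corrected hypothesis, the attempt is incomplete.
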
 

\section{Algebraic degeneracy of two meromorphic mappings}

In order to prove the main theorem, we need the following algebraic propositions.

Let $H_1,...,H_{2n+1}$ be $(2n+1)$ hyperplanes of $\P^n(\C)$ in general position given by
$$ H_i:\ \ x_{i0}\omega_0+x_{i1}\omega_1+\cdots +x_{in}\omega_n=0\ (1\le i\le 2n+1). $$
We consider the rational map $\Phi :\P^n(\C)\times \P^n(\C)\longrightarrow \P^{2n}(\C) $ as follows:

For $v=(v_0:v_1\cdots :v_n),\ w=(w_0:w_1:\cdots :w_n)\in \P^n(\C)$, we define the value $\Phi (v,w)= (u_0:\cdots :u_{2n+1})\in\P^{2n}(\C)$ by
$$ u_i=\frac{x_{i0}v_0+x_{i1}v_1+\cdots +x_{in}v_n}{x_{i0}w_0+x_{i1}w_1+\cdots +x_{in}w_n}. $$
\begin{proposition}[{see \cite[Proposition 5.9]{Fu99}}]\label{2.1}
The map $\Phi$ is a birational map of $\P^n(\C)\times \P^n(\C)$ onto $\P^{2n}(\C)$.
\end{proposition}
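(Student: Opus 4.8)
The plan is to reduce the statement to elementary linear algebra by encoding $\Phi$ through the linear map $\Lambda\colon\C^{n+1}\to\C^{2n+1}$, $\Lambda(x)=(L_1(x),\dots,L_{2n+1}(x))$, where $L_i(x)=x_{i0}\omega_0+\cdots+x_{in}\omega_n$ is the form defining $H_i$. Since the $H_i$ are in general position, any $n+1$ of the $L_i$ are linearly independent, so $\Lambda$ is injective and $V:=\Lambda(\C^{n+1})$ is an $(n+1)$-dimensional subspace of $\C^{2n+1}$. In these terms, for generic $(v,w)$ the point $\Phi(v,w)=u=(u_1:\cdots:u_{2n+1})$ satisfies $u_i=L_i(v)/L_i(w)$, which after absorbing the projective scalar reads $\Lambda(v)=D_u\,\Lambda(w)$ with $D_u=\mathrm{diag}(u_1,\dots,u_{2n+1})$. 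I note that $\dim\bigl(\P^n(\C)\times\P^n(\C)\bigr)=2n=\dim\P^{2n}(\C)$, so it suffices to show that $\Phi$ is dominant with generic fibre a single point; birationality then follows.

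To recover $(v,w)$ from a generic $u$, observe that a preimage corresponds exactly to a line $b=\Lambda(w)$ lying in $V\cap D_u^{-1}V$, with $v$ then determined by $\Lambda(v)=D_u b$; conversely every such $b$ yields a preimage, so $\Phi^{-1}(u)\cong\mathbf{P}(V\cap D_u^{-1}V)$. Since $D_u$ is invertible for generic $u$ and $\dim V=\dim D_u^{-1}V=n+1$ inside $\C^{2n+1}$, the dimension formula gives
$$\dim\bigl(V\cap D_u^{-1}V\bigr)\ge (n+1)+(n+1)-(2n+1)=1,$$
so a preimage always exists and $\Phi$ is dominant. The heart of the matter is to prove that for generic $u$ this intersection is exactly one-dimensional; its projectivization is then a single point, the fibre $\Phi^{-1}(u)$ is a single point, and moreover $b$ — hence $w$ and $v$ — is obtained by solving a linear system whose coefficients are polynomial in $u$, which furnishes an explicit rational inverse $\Psi$. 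Verifying $\Phi\circ\Psi=\mathrm{id}$ and $\Psi\circ\Phi=\mathrm{id}$ as rational maps is then a direct substitution.

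The main obstacle is the genericity claim $\dim\bigl(V\cap D_u^{-1}V\bigr)=1$. Writing $u=\Phi(v_0,w_0)$ and translating the condition $\Lambda(v)=D_u\Lambda(w)$ into the unknown pair $(v,w)$, one gets the $2n+1$ equations $L_i(w_0)L_i(v)-L_i(v_0)L_i(w)=0$, whose solution space (always containing $(v_0,w_0)$) has dimension equal to that of the intersection. A second independent solution forces a nonzero $(c_i)$ with $\sum_i c_iL_i(w_0)\mathbf{x}_i=0$ and $\sum_i c_iL_i(v_0)\mathbf{x}_i=0$, where $\mathbf{x}_i=(x_{i0},\dots,x_{in})$. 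Because any $n+1$ of the $\mathbf{x}_i$ are linearly independent, every nontrivial linear relation among them has support of size at least $n+2$, and I would exploit this together with the freedom in choosing $(v_0,w_0)$ to force $(c_i)=0$. Concretely, the rank of the associated $(2n+1)\times(2n+2)$ coefficient matrix $M_u$ is maximal on a dense open set, so it suffices to exhibit a single pair $(v_0,w_0)$ for which $\rank M_u=2n+1$; general position of the hyperplanes is exactly what makes such a choice possible, and I expect this rank computation to be the only delicate point.
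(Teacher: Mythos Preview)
The paper does not give its own proof of this proposition; it is quoted directly from Fujimoto \cite[Proposition~5.9]{Fu99}, so there is no in-paper argument to compare against. I will therefore comment on your proposal on its own merits.

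Your linear-algebra reformulation is correct and is essentially the natural one: the identification of the fibre $\Phi^{-1}(u)$ with (an open subset of) $\mathbf{P}\bigl(V\cap D_u^{-1}V\bigr)$ is the key point, and your dimension count $\dim(V\cap D_u^{-1}V)\ge 1$ does establish dominance.

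Where you make life harder than necessary is in the final step. You set up a direct rank computation for $M_u$ and leave it as the ``only delicate point'', but in fact your own framework already closes the argument without it. You have shown that $\Phi$ is dominant between irreducible varieties of the same dimension $2n$; the fibre dimension theorem then forces the generic fibre to be $0$-dimensional. But you have also shown that each fibre is (a nonempty open subset of) a projective space $\mathbf{P}^{\,d(u)-1}$, and a $0$-dimensional nonempty open subset of $\mathbf{P}^{\,d(u)-1}$ is possible only when $d(u)=1$. Hence the generic fibre is a single reduced point, $\Phi$ has degree $1$, and birationality follows. The explicit rational inverse via Cramer's rule that you sketch is then exactly right.

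So there is no genuine gap in your strategy --- only an unnecessarily laborious endgame. Replace the proposed rank computation by the one-line appeal to fibre dimension, and the proof is complete.
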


Let $f$ and $g$ be two meromorphic mappings of $\C^m$ into $\P^n(\C)$ with reduced representations
$$ f=(f_0:\cdots :f_n)\ \text{ and }\ g=(g_0:\cdots :g_n). $$
Define $h_i=\frac{(f,H_i)/f_0}{(g,H_i)/g_0}\ (1\le i\le 2n+1)$ and $h_I=\prod_{i\in I}h_i$ for each subset $I$ of $\{1,...,2n+1\}.$
Set $\mathcal I=\{I=(i_1,...,i_n)\ ;\ 1\le i_1<\cdots <i_n\le 2n+1\}$. We have the following proposition
\begin{proposition}\label{2.2}
If there exist constants $A_I$, not all zero, such that
$$ \sum_{I\in\mathcal I}A_Ih_I\equiv 0 $$
then the map $f\times g$ into $\P^n(\C)\times\P^n(\C)$ is algebraically degenerate.
\end{proposition}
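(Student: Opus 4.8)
The plan is to convert the scalar relation $\sum_{I\in\mathcal I}A_Ih_I\equiv 0$ into a genuine bihomogeneous polynomial relation among the coordinates of $f$ and $g$, and then read off algebraic degeneracy directly. The starting observation is that, in the coordinates $u_i=(v,H_i)/(w,H_i)$ of Proposition~\ref{2.1}, the composition $\Phi\circ(f\times g)$ is exactly the map $(h_1:\cdots:h_{2n+1})$ of $\C^m$ into $\P^{2n}(\C)$: substituting $v=f$, $w=g$ gives $\frac{(f,H_i)}{(g,H_i)}=h_i\cdot\frac{f_0}{g_0}$, and the common factor $f_0/g_0$ drops out projectively. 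Thus a degree-$n$ relation among the $h_I$ should be the same as saying that the image of $\Phi\circ(f\times g)$ lies on a fixed hypersurface of $\P^{2n}(\C)$, and transporting that hypersurface back through $\Phi^{-1}$ will give the desired subvariety of $\P^n(\C)\times\P^n(\C)$.

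Concretely, I would introduce the degree-$n$ homogeneous polynomial $P(u_1,\dots,u_{2n+1})=\sum_{I\in\mathcal I}A_I\prod_{i\in I}u_i$, which is nonzero since the $A_I$ are not all zero and the multilinear monomials $\prod_{i\in I}u_i$ are pairwise distinct. Clearing denominators against $\prod_{j=1}^{2n+1}(w,H_j)$ produces the bihomogeneous polynomial of bidegree $(n,n+1)$,
$$ Q(v,w)=\sum_{I\in\mathcal I}A_I\prod_{i\in I}(v,H_i)\cdot\prod_{j\notin I}(w,H_j), $$
characterized by $Q(v,w)=\bigl(\prod_{j}(w,H_j)\bigr)\,P(\Phi(v,w))$ wherever the $(w,H_j)$ are nonzero. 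A routine computation — using that every $I\in\mathcal I$ has $|I|=n$, so the factor $(g_0/f_0)^n$ pulls out uniformly and the nonvanishing factor $\prod_j(g,H_j)$ can be cleared — then shows that the hypothesis $\sum_{I}A_Ih_I\equiv 0$ is equivalent to $Q(f,g)\equiv 0$. Granting that $Q$ is not the zero polynomial, the set $\{Q=0\}$ is a proper algebraic subset of $\P^n(\C)\times\P^n(\C)$ containing the image of $f\times g$, which is precisely algebraic degeneracy.

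The one point that requires an actual idea — and the main obstacle — is showing that $Q\not\equiv 0$ as a polynomial; a priori a nonzero $P$ could conceivably pull back to the zero polynomial under the substitution $u_i\mapsto(v,H_i)/(w,H_i)$. This is exactly where Proposition~\ref{2.1} enters: if $Q\equiv 0$ then $P\circ\Phi\equiv 0$, so $P$ vanishes on the entire image of $\Phi$; but $\Phi$ is birational, hence dominant, so that image is Zariski dense in $\P^{2n}(\C)$, forcing $P\equiv 0$ and contradicting the choice of the $A_I$. I expect no other real difficulty: once birationality supplies $Q\ne 0$, the remaining steps are the bookkeeping identities relating $\sum_I A_I h_I$ and $Q(f,g)$ described above.
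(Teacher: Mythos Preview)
Your proposal is correct and follows essentially the same route as the paper: both clear denominators to form the bihomogeneous polynomial $Q(v,w)=\sum_{I}A_I\prod_{i\in I}(v,H_i)\prod_{j\in I^c}(w,H_j)$, invoke the birationality of $\Phi$ from Proposition~\ref{2.1} to ensure $Q\not\equiv 0$, and then observe that the hypothesis forces $Q(f,g)\equiv 0$. Your write-up is in fact more careful than the paper's about why birationality (hence dominance) guarantees $Q\ne 0$, but the underlying argument is identical.
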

\begin{proof}
For $v=(v_0:v_1\cdots :v_n),\ w=(w_0:w_1:\cdots :w_n)\in \P^n(\C)$, we define the map $\Phi (v,w)= (u_0:\cdots :u_{2n+1})\in\P^{2n}(\C)$ as above.  By Proposition \ref{3.1}, $\Phi$ is birational function. This implies that the function 
$$\sum_{I\in\mathcal I}A_I\frac{x_{i0}v_0+x_{i1}v_1+\cdots +x_{in}v_n}{x_{i0}w_0+x_{i1}w_1+\cdots +x_{in}w_n}$$
is a nonzero rational function. It follows that 
$$ Q(v_0,...,v_n,w_0,...,w_n)=\sum_{I\in\mathcal I}A_I\left (\prod_{i\in I}\sum_{j=0}^{n}x_{ij}v_j\right )\times \left (\prod_{i\in I^c}\sum_{j=0}^{n}x_{ij}w_j\right ),$$
where $I^c=\{1,....,2n+1\}\setminus I$, is a nonzero polynomial. Since the assumption of the proposition, it is clear that
$$ Q(f_0,...,f_n,g_0,...,g_n)\equiv 0. $$
Hence $f\times g$ is algebraically degenerate.
\end{proof}

\begin{proposition}\label{2.3}
Let $f,g$ be two meromorphic mappings of $\C^m$ into $\P^n(\C)$. Let $\{H_i\}_{i=1}^{2n+2}$ be $(2n+2)$ hyperplanes of $\P^n(\C)$ in general position as in Main Theorem. Suppose that the map $f\times g$ is algebraically nondegenerate. Then the following assertions hold:

$\mathrm{(a)}$ $||\ T_f(r)=O(T_g(r))$ and $||\ T_g(r)=O(T_f(r))$.

$\mathrm{(b)}$ $m\biggl (r,\dfrac{(f,H_i)}{(g,H_i)}\dfrac{(g,H_j)}{(f,H_j)}\biggl )=o(T_f(r))\ \forall 1\le i,j\le 2n+2$.
\end{proposition}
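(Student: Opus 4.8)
The plan is to prove (a) directly from the Second Main Theorem together with the sharing hypotheses, and then to reduce (b) to a single proximity estimate built around one explicit holomorphic function. The first thing to record is that the algebraic nondegeneracy of $f\times g$ forces both $f$ and $g$ to be linearly nondegenerate: if, say, the image of $f$ lay in a hyperplane $\{L=0\}$ of $\pnc$, then $f\times g$ would map into the proper algebraic subset $\{L(v)=0\}$ of $\pnc\times\pnc$, contradicting the hypothesis. Hence the Second Main Theorem (Theorem \ref{1.2}) is available for each of $f$ and $g$.

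For (a) I would apply the Second Main Theorem to $f$ and the $2n+2$ hyperplanes to get $||\ (n+1)T_f(r)\le\sum_{i=1}^{2n+2}N^{[n]}_{(f,H_i)}(r)+o(T_f(r))$, and then use the sharing to convert the right-hand side into data about $g$. For $1\le i\le 2n+1$ the supports of $(f,H_i)$ and $(g,H_i)$ coincide, so $N^{[n]}_{(f,H_i)}\le nN^{[1]}_{(f,H_i)}=nN^{[1]}_{(g,H_i)}\le nT_g(r)+O(1)$ by the First Main Theorem (Theorem \ref{1.1}); for $i=2n+2$, truncation at level $n+1$ gives $N^{[n]}_{(f,H_{2n+2})}\le N^{[n+1]}_{(f,H_{2n+2})}=N^{[n+1]}_{(g,H_{2n+2})}\le T_g(r)+O(1)$. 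Summing yields $||\ (n+1)T_f(r)\le(2n^2+n+1)T_g(r)+o(T_f(r))$, and absorbing the error term gives $||\ T_f(r)=O(T_g(r))$. Since both sharing conditions are symmetric in $f$ and $g$, interchanging their roles gives $||\ T_g(r)=O(T_f(r))$. This part is routine bookkeeping.

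For (b), write $P_k=(f,H_k)$, $Q_k=(g,H_k)$ and observe that the quantity in question is $F_{ij}:=\frac{P_iQ_j}{Q_iP_j}$, which is independent of the chosen reduced representations. The key algebraic observation is that the holomorphic function $D_{ij}:=P_iQ_j-Q_iP_j$ vanishes on $\bigcup_{k}f^{-1}(H_k)$: at any $z$ with $f(z)=g(z)$ — which holds on every $f^{-1}(H_k)$ by the sharing hypothesis — the reduced representations satisfy $(f_0,\dots,f_n)(z)=\lambda(z)(g_0,\dots,g_n)(z)$, so $P_l(z)=\lambda(z)Q_l(z)$ for all $l$ and hence $D_{ij}(z)=0$. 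Consequently $F_{ij}-1=\frac{D_{ij}}{Q_iP_j}$ vanishes on $\bigcup_{k\neq i,j}f^{-1}(H_k)$, so that $F_{ij}\equiv 1$ there, and as divisors $\nu_{D_{ij}}\ge\sum_{k=1}^{2n+2}\nu^{[1]}_{(f,H_k)}$, the supports being pairwise of codimension $\ge 2$ in intersection by the hypothesis $\dim f^{-1}(H_i\cap H_j)\le m-2$.

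It remains to convert this structural information into the proximity bound, and this is where the real work lies. The growth estimate from (a) only gives $T(r,F_{ij})=O(T_f(r))$, not smallness, so $m(r,F_{ij})=o(T_f(r))$ cannot come from the characteristic function; it must reflect genuine cancellation, and the only tool producing $o(T_f(r))$ proximity bounds is the Lemma on Logarithmic Derivatives (Lemma \ref{1.3}). Setting $C_k=Q_k/P_k$, one has $\frac{\mathcal{D}^\alpha C_k}{C_k}=\frac{\mathcal{D}^\alpha Q_k}{Q_k}-\frac{\mathcal{D}^\alpha P_k}{P_k}$, whence $||\ m\bigl(r,\frac{\mathcal{D}^\alpha C_k}{C_k}\bigr)=o(T_f(r))$ for every $k,\alpha$, using (a) to control $T_{C_k}$. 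Since $F_{ij}=C_j/C_i$ is pinned to the value $1$ along the large divisor $\bigcup_{k\neq i,j}f^{-1}(H_k)$ while all of its logarithmic derivatives are small in proximity, one expects $||\ m(r,F_{ij})=o(T_f(r))$; I would carry this out by estimating $m\bigl(r,\frac{D_{ij}}{Q_iP_j}\bigr)$ through the auxiliary determinants $P_k\mathcal{D}^\alpha Q_k-Q_k\mathcal{D}^\alpha P_k$, whose quotients by $P_kQ_k$ are exactly the differences of logarithmic derivatives above, and by exploiting the common linear-dependence relations $\sum_k a_kP_k=0$, $\sum_k a_kQ_k=0$ satisfied by any $n+2$ of the forms (Proposition \ref{1.4}). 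Making this final passage from ``small logarithmic derivative plus normalization on the preimages'' to ``small proximity'' rigorous is the principal obstacle.
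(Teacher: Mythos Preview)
Your argument for (a) is essentially the paper's, with slightly different bookkeeping on the constants.

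For (b) you have identified the right tool (the Logarithmic Derivative Lemma) but, as you yourself concede, you do not have a mechanism that forces $F_{ij}$ itself---rather than merely its logarithmic derivatives---to have small proximity. Knowing that $F_{ij}=1$ on a set of large counting function bounds $N(r,\nu_{F_{ij}-1})$ from below but says nothing directly about $m(r,F_{ij})$; there is no general principle ``pinned to $1$ on a large divisor $+$ small logarithmic derivatives $\Rightarrow$ small proximity'', since a nonvanishing holomorphic function can equal $1$ on any prescribed divisor and still have proximity of full size. Your invocation of Proposition~\ref{1.4} through the separate linear relations $\sum_k a_kP_k=0$ and $\sum_k a_kQ_k=0$ does not produce a relation among the $h_k$ either, since those hold in the $P_k$ and $Q_k$ independently.

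The paper closes the gap with a global algebraic identity rather than the local sharing data. From $h_i=\dfrac{(f,H_i)/f_0}{(g,H_i)/g_0}$ one has $(f,H_i)-\dfrac{f_0h_i}{g_0}(g,H_i)=0$ for each $i$, so the $(2n+2)\times(2n+2)$ matrix with $i$-th row $(a_{i0},\dots,a_{in},a_{i0}h_i,\dots,a_{in}h_i)$ annihilates a nonzero vector and hence has vanishing determinant. Laplace expansion yields $\sum_{I\in\mathcal I}A_Ih_I\equiv 0$, summed over $(n{+}1)$-subsets $I\subset\{1,\dots,2n+2\}$, with all $A_I\ne 0$ by general position. \emph{Now} Proposition~\ref{1.4} is applied to this sum: it partitions $\mathcal I$ so that within each block every ratio $h_{I'}/h_I$ is a rational function in logarithmic derivatives. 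Passing to the torsion-free abelian group $\mathcal M^*/G$ (where $G$ consists of functions some power of which is such a rational function) and ordering the classes $[h_i]$ lexicographically, one runs a dichotomy: either some $[h_{i_0}]<[h_{i_0+1}]$, in which case the block containing $I=(1,\dots,n+1)$ produces---via Proposition~\ref{2.2}---an algebraic relation on $f\times g$, contradicting the hypothesis; or all $[h_i]$ coincide, so each $h_i/h_j$ lies in $G$. In the surviving case some power $(h_i/h_j)^{m_{ij}}$ is a rational function in logarithmic derivatives of the $h_s$, and then $m(r,h_i/h_j)=\frac{1}{m_{ij}}m\bigl(r,(h_i/h_j)^{m_{ij}}\bigr)+O(1)=o(T_f(r))$ by Lemma~\ref{1.3}. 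The determinant identity and the ensuing group-theoretic dichotomy are precisely the missing ingredients in your sketch.
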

\begin{proof}
(a). By the supposition the map $f\times g$ is algebraically non-degenerate, both $f$ and $g$ are linearly nondegenerate. Assume that $f,g$ have reduced representations
$$ f=(f_0:\cdots :f_n),\ g=(g_0:\cdots :g_n),$$
and the hyperplane $H_i\ (1\le i\le 2n+2)$ is given by
$$ H_i=\{(w_0:\cdots :w_n)\ ;\ a_{i0}w_0+\cdots +a_{in}w_n=0\}. $$
By Theorem \ref{1.2} we have
\begin{align*}
\bigl |\bigl |\ (n+1)T_f(r)&\le\sum_{i=1}^{2n+2}N_{(f,H_i)}^{[n]}(r)+o(T_f(r))\\
&\le n\cdot\sum_{i=1}^{2n+2}N_{(g,H_i)}^{[1]}(r)+o(T_f(r))\\
&\le n(2n+2)(T_g(r))+o(T_f(r)).
\end{align*}
Then we have $||\ T_f(r)=O(T_g(r))$. Similarly we also have $||\ T_g(r)=O(T_f(r))$.  We have the first assertion of the proposition.

(b). Since $\sum_{k=0}^n a_{ik}f_k-\dfrac{f_0h_i}{g_0}\cdot \sum_{k=0}^n a_{ik}g_k=0\ (1\le i \le 2n+2),$ it implies that 
\begin{align}\label{2.4}
\Phi :=\det \ (a_{i0},...,a_{in},a_{i0}h_i,...,a_{in}h_i; 1\le i \le 2n+2)\equiv 0.
\end{align}

For each subset $I\subset \{1,2,...,2n+2\},$ put $h_I=\prod_{i\in I}h_i$. Denote by $\mathcal {I}$ the set 
$$ \mathcal I=\{I=(i_1,...,i_{n+1})\ ;\ 1\le i_1<\cdots <i_{n+1}\le 2n+2\}. $$
For each $I=(i_1,...,i_{n+1})\in \mathcal {I}$, define 
\begin{align*}
A_I=(-1)^{\frac {(n+1)(n+2)}{2}+i_1+...+i_{n+1}}&\times \det (a_{i_rl};1\le r \le n+1,0\le l \le n)\\
&\times\det (a_{j_sl};1\le s \le n+1,0\le l \le n),
\end{align*}
where $J=(j_1,...,j_{n+1})\in \mathcal {I}$ such that $I \cup J =\{1,2,...,2n+2\}.$

We denote by $\mathcal M$ the field of all meromorphic functions on $\C^m$, and denote by $G$ the group of all nonzero functions $\varphi$ so that $\varphi^m$ is a rational function in logarithmic derivatives of ${h_i}'$s for some positive integers $m$. We denote by $\mathcal H$ the subgroup of the group $\mathcal M/G$ generated by elements $[h_1],...,[h_{2n+2}]$.

Hence $\mathcal H$ is a finitely generated torsion-free abelian group. We call $(x_1,...,x_p)$ a basis of $\mathcal H$. Then for each $i\in\{1,...,2n+2\}$, we have
$$ [h_i] =x_1^{t_{i1}}\cdots x_p^{t_{ip}}.$$
Put $t_i=(t_{i1},...,t_{ip})\in \Z^p$ and denote by $``\leqslant "$ the lexicographical order on $\Z^p$. Without loss of generality, we may assume that
$$ t_1 \leqslant  t_2\leqslant \cdots\leqslant t_{2n+2}.$$

Now the equality (\ref{2.4}) implies that 
$$\sum_{I\in \mathcal {I}}A_Ih_I=0.$$
Applying Proposition \ref{1.4} to meromorphic mappings $A_Ih_I\ (I\in\mathcal I)$, then we have a partition $\mathcal I=\mathcal I_1\cup\cdots\cup \mathcal I_k$ with $ \mathcal I_\alpha\ne\emptyset$ and $\mathcal I_\alpha\cap\mathcal I_\beta =\emptyset$ for $\alpha\ne\beta$ such that for each $\alpha$,
\begin{align}
\label{2.5}
&\sum_{I\in\mathcal I_\alpha}A_Ih_I\equiv 0,\\ 
\label{2.6}
& \frac{A_{I'}h_{I'}}{A_Ih_I}\ (I,I'\in\mathcal I_\alpha)\text{ are rational functions in logarithmic derivatives of ${A_{J}h_J}'$s}. 
\end{align}
Moreover, we may assume that $I_\alpha$ is minimal, i.e., there is no proper subset $J_\alpha\subsetneq I_\alpha$ with $\sum_{I\in\mathcal J_\alpha}A_Ih_I\equiv 0$.

We distinguish the following two cases:

\textit{Case 1.} Assume that there exists an index $i_0$ such that $t_{i_0}<t_{i_0+1}$. We may assume that $i_0\le n+1$ (otherwise we consider the relation $``\geqslant "$ and change indices of $\{h_1,...,h_{2n+2}\}$).

Assume that $I=(1,2,...,n+1)\in\mathcal I_1$. By the assertion (\ref{2.6}), for each $J=(j_1,...,j_{n+1})\in \mathcal I_1\ (1\le j_1<\cdots< j_{n+1}\le 2n+2)$, we have $[h_{I}]=[h_{J}]$. This implies that
$$ t_1+\cdots + t_{n+1}=t_{j_1}+\cdots +t_{j_{n+1}}. $$
This yields that $t_{j_i}=t_i\ (1\le i\le n+1)$. 

Suppose that $j_{i_0}>i_0$, then $t_{i_0}<t_{i_0+1}\leqslant t_{j_{i_0}}$. This is a contradiction. Therefore $j_{i_0}=i_0$, and hence $ j_1=1,...,j_{i_0-1}=i_0-1.$ We conclude that $J=(1,...,i_0,j_{i_0+1},...,j_{n+1})$ and $i_0\le n+1$ for each $J\in\mathcal I_1.$

By (\ref{2.6}), we have
$$ \sum_{I\in\mathcal I_1}A_Ih_I=h_{i_0}\sum_{I\in\mathcal I_1}A_Ih_{I\setminus\{i_0\}}\equiv 0. $$
Thus 
$$\sum_{I\in\mathcal I_1}A_Ih_{I\setminus\{i_0\}}\equiv 0.$$
Then Proposition \ref{2.2} shows that $f\times g$ is algebraically degenerate. It contradicts to the supposition.

\textit{Case 2.} Assume that $t_1=\cdots =t_{2n+2}$. It follows that $\frac{h_I}{h_J}\in G$ for any $I,J\in\mathcal I$. Then we easily see that $\frac{h_i}{h_j}\in G$ for all $1\le i,j\le 2n+2.$ Hence, there exists a positive integer $m_{ij}$ such that $\left (\frac{h_i}{h_j}\right )^{m_{ij}}$ is a rational funtion in logarithmic derivatives of ${h_s}'$s. Therefore, by lemma on logarithmic derivatives, we have
\begin{align*}
\biggl |\biggl |\ \ m\bigl (r,\frac{h_i}{h_j}\bigl )&=\frac{1}{m_{ij}}m\bigl (r,\left (\frac{h_i}{h_j}\right )^{m_{ij}}\bigl )+O(1)\\
&=O\biggl (\max m\bigl (r,\frac{\mathcal D^\alpha (h_s)}{h_s}\bigl )\biggl )+O(1)=o(\max T(r,h_s))+O(1)\\
&=o\biggl (\max T\bigl (r,\dfrac{(f,H_s)}{f_0}\bigl )\biggl )+o\biggl (\max T\bigl (r,\dfrac{(g,H_s)}{g_0}\bigl )\biggl )+O(1)\\
&=o(T_f(r))+o(T_g(r))=o(T_f(r)).
\end{align*}
Therefore, we have
$$
  m\biggl (r,\dfrac{(f,H_i)}{(g,H_i)}\dfrac{(g,H_j)}{(f,H_j)}\biggl )=o(T_f(r))\ \forall 1\le i,j\le 2n+2.
$$
The second assertion is proved.
\end{proof}

\begin{proposition}\label{2.7}
Let $f,g:\C^m\rightarrow \P^n(\C)$ be two meromorphic mappings and let $\{H_i\}_{i=1}^{2n+2}$ be $2n+2$ hyperplanes of $\P^n(\C)$ in general position with
$$\dim  f^{-1}(H_i \cap H_j) \leqslant m-2 \quad (1 \leqslant i<j \leqslant 2n+2).$$ 
Assume that $f$ and $g$ share $H_i\ (1\le i\le 2n+2)$ ignoring multiplicity. Suppose that the map $f\times g$ is algebraically nondegenerate. Then for every $i=1,...,2n+2,$ the following assertions hold

$\mathrm{(i)}$ \ $||\ T_f(r)=N_{(f,H_i)}(r)+o(T_f(r))$ and $||\ T_g(r)=N_{(g,H_i)}(r)+o(T_f(r))$, 

$\mathrm{(ii)}$ \ $||\  N(r,|\nu^0_{(f,H_i)}-\nu^0_{(g,H_i)}|)+2N^{[1]}_{(h,H_i)}(r)=\sum_{t=1}^{2n+2}N^{[1]}_{(h,H_t)}(r)+o(T_f(r)),\ h\in\{f,g\},$ 

$\mathrm{(iii)}$ \ $||\ N(r,\min\{\nu^0_{(f,H_i)},\nu^0_{(g,H_i)}\})= \sum_{u=f,g} N^{[n]}_{(u,H_v)}(r)-nN^{[1]}_{(f,H_v)}(r)+o(T_f(r)).$

$\mathrm{(iv)}$ \ Moreover, if there exists an index $i_0\in\{1,...,2n+2\}$ such that $f$ and $g$ share $H_{i}$ with multiplicity truncated by level $n+1$ then 
$$\nu_{(f,H_{i_0})}(z)=\nu_{(g,H_{i_0})}(z)=n $$
 for all $z\in f^{-1}(H_{i_0})$ outside an analytic subset of counting function regardless of multiplicity is equal to $T_f(r)$.
\end{proposition}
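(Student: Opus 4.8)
The plan is to prove the four assertions in the order stated, with the headline conclusion (iv) extracted from the counting identity (iii) by a short pointwise argument. Throughout I write $a_i(z)=\nu^0_{(f,H_i)}(z)$ and $b_i(z)=\nu^0_{(g,H_i)}(z)$, and I lean on three standing facts: since $f\times g$ is algebraically nondegenerate, both $f$ and $g$ are linearly nondegenerate and, by Proposition \ref{2.3}(a), $T_f(r)$ and $T_g(r)$ have the same growth, so that ``$o(T_f(r))$'' and ``$o(T_g(r))$'' may be used interchangeably; sharing $H_i$ ignoring multiplicity gives $\supp a_i=\supp b_i$, hence $N^{[1]}_{(f,H_i)}(r)=N^{[1]}_{(g,H_i)}(r)$ for every $i$; and Proposition \ref{2.3}(b) controls the proximity of every ratio $\frac{(f,H_i)}{(g,H_i)}\frac{(g,H_j)}{(f,H_j)}$.

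For (i) the point is to show $m_{f,H_i}(r)=o(T_f(r))$ for each $i$ separately, equivalently, by Theorem \ref{1.1}, that $N_{(f,H_i)}(r)=T_f(r)+o(T_f(r))$. I would apply the Second Main Theorem (Theorem \ref{1.2}) to $f$ and to $g$ against the $2n+2$ hyperplanes, use $N^{[n]}_{(u,H_i)}\le n\,N^{[1]}_{(u,H_i)}$, and feed in $N^{[1]}_{(f,H_i)}=N^{[1]}_{(g,H_i)}$. The defect inequality $\sum_i m_{f,H_i}(r)\le (n+1)T_f(r)+o(T_f(r))$ coming from Theorem \ref{1.2} alone is not enough; the extra leverage is Proposition \ref{2.3}(b), which forces the ratio functions to have negligible proximity, and I would use it to show that the Second Main Theorem estimates for $f$ and for $g$ are in fact equalities up to $o(T_f(r))$. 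That saturation yields $m_{f,H_i}(r)=o(T_f(r))$ for every $i$, and running the argument with $g$ in place of $f$ gives the companion statement. I regard establishing this saturation as the first technical heart of the proposition.

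Steps (ii) and (iii) are then refined bookkeeping. For (ii) I would read off the divisors of $\psi_{ij}:=\frac{(f,H_i)}{(g,H_i)}\frac{(g,H_j)}{(f,H_j)}$: because $\dim f^{-1}(H_i\cap H_j)\le m-2$ the contributions at $H_i$-points and at $H_j$-points have disjoint supports in codimension one, so the pole divisor of $\psi_{ij}$ is $\max(b_i-a_i,0)+\max(a_j-b_j,0)$ and its zero divisor is $\max(a_i-b_i,0)+\max(b_j-a_j,0)$. Since $m(r,\psi_{ij})=o(T_f(r))$ and $T(r,\psi_{ij})=T(r,1/\psi_{ij})+O(1)$, this converts into identities among the difference divisors $|a_i-b_i|$; combining them with the saturated Second Main Theorem from (i) produces (ii) after a short computation, and an entirely parallel comparison of $N^{[n]}$, $N^{[1]}$ and the full counting functions yields (iii). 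Here the indices $i$ and $v$ in the displayed form of (iii) are to be read as equal.

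Finally (iv) is a clean pointwise argument on $f^{-1}(H_{i_0})$. Fix such a $z$, put $a=a_{i_0}(z)\ge 1$ and $b=b_{i_0}(z)\ge 1$, and set $D(z)=\min(a,n)+\min(b,n)-n-\min(a,b)$, so that (iii) with $i=i_0$ reads $N(r,\sum_z(-D(z)))=o(T_f(r))$. The truncated sharing $\min(a,n+1)=\min(b,n+1)$ leaves only two possibilities: if $a,b\le n$ then $a=b=:c$ and $D=c-n\le 0$, with equality precisely when $c=n$; if $a\ge n+1$ then also $b\ge n+1$ and $D=n-\min(a,b)\le -1<0$. Hence $-D(z)\ge 0$ throughout, and $-D(z)=0$ exactly when $a=b=n$; since $-D$ is integer-valued, the reduced counting function of the set where $a=b=n$ fails is dominated by $N(r,\sum_z(-D(z)))=o(T_f(r))$, which is exactly the assertion. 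The main obstacle is therefore not (iv) itself but the passage from the defect \emph{inequalities} to the exact \emph{identities} (i)--(iii): everything hinges on using algebraic nondegeneracy, through Proposition \ref{2.3}, to force the Second Main Theorem estimates to hold as equalities up to $o(T_f(r))$.
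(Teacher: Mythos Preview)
Your outline has a genuine gap in the mechanism for (i)--(iii). You correctly identify that Proposition~\ref{2.3}(b) is the extra leverage beyond the raw Second Main Theorem, but you never say \emph{how} the small proximity of $\psi_{ij}$ forces the saturation you need. Small proximity by itself only gives $T(r,\psi_{ij})=N(r,\nu^\infty_{\psi_{ij}})+o(T_f(r))$; to squeeze anything out of this you need a \emph{lower} bound on $T(r,\psi_{ij})$, and your proposal supplies none. The missing ingredient is the second half of the sharing hypothesis, which you never invoke: the paper's definition of ``sharing $H_t$'' includes $f(z)=g(z)$ on $f^{-1}(H_t)$, not merely $\supp a_t=\supp b_t$. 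This forces $\psi_{ij}\equiv 1$ on $\bigcup_{t\neq i,j}f^{-1}(H_t)$, hence
\[
\sum_{t\neq i,j}N^{[1]}_{(f,H_t)}(r)\le N(r,\nu^0_{\psi_{ij}-1})\le T(r,\psi_{ij})=N(r,\nu^\infty_{\psi_{ij}})+o(T_f(r)).
\]
Only now can one chain this against $N_{(u,H_t)}(r)\le T_u(r)$ and the pointwise bound $\min(a,b)\ge\min(a,n)+\min(b,n)-n$, sum over pairs $(i,j)$, and compare with the Second Main Theorem to force every inequality in the chain to collapse to an equality. In the paper, assertions (i), (ii), (iii) drop out \emph{simultaneously} from this single squeezing, not sequentially as your plan suggests; in particular (i) is not an independent first step from which (ii)--(iii) are ``refined bookkeeping''.

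Your treatment of (iv) is fine and in fact slightly cleaner than the paper's: you do the pointwise case split directly, whereas the paper threads the inequality $N^{[n+1]}_{(f,H_{i_0})}(r)\le N(r,\min\{a_{i_0},b_{i_0}\})\le N^{[n]}_{(f,H_{i_0})}(r)+o(T_f(r))$ through (iii) and then argues by symmetry. Both arrive at the same place.
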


\begin{proof}
(i)-(iii). For each two indecies $i$ and $j$, $1\le i<j\le 2n+2$, we defined
$$
  P_{ij}\overset{Def}{:=}\dfrac{(f,H_i)}{(g,H_i)}\cdot\dfrac{(g,H_j)}{(f,H_j)}.
$$
 By the supposition that the map $f\times g$ is algebraically nondegenerate,  we have that $P_{ij}$ is not constant.
Then by Proposition \ref{2.3} we have 
\begin{align*}
T(r,P_{ij})&=m(r,P_{ij})+N(r,\nu^{\infty}_{P_{ij}})=N(r,\nu^{\infty}_{P_{ij}})+o(T_f(r))\\ 
&= N(r,\nu^{\infty}_{\frac{(f,H_i)}{(g,H_i)}})+N(r,\nu^{\infty}_{\frac{(g,H_j)}{(f,H_j)}})+o(T_f(r))
\end{align*}
On the other hand, since $f=g$ and then $P_{ij}=1$ on $\bigcup_{\underset{t\ne i,j}{t=1}}^{2n+2}f^{-1}(H_t)$, therefore we have
$$
  N(r,\nu^{0}_{P_{ij}-1})\ge\sum_{\underset{t\ne i,j}{t=1}}^{2n+2}N^{[1]}_{(f,H_t)}(r).
$$
Since $N(r,\nu^{0}_{P_{ij}-1})\le T(r,P_{ij})$, we have
\begin{align}\label{2.8}
N(r,\nu^{\infty}_{\frac{(f,H_i)}{(g,H_i)}})+N(r,\nu^{\infty}_{\frac{(g,H_j)}{(f,H_j)}})\ge\sum_{\underset{t\ne i,j}{t=1}}^{2n+2}N^{[1]}_{(g,H_t)}(r)+o(T_f(r)).
\end{align}
Similarly, we also get
\begin{align}\label{2.9}
N(r,\nu^{\infty}_{\frac{(g,H_i)}{(f,H_i)}})+N(r,\nu^{\infty}_{\frac{(f,H_j)}{(g,H_j)}})\ge\sum_{\underset{t\ne i,j}{t=1}}^{2n+2}N^{[1]}_{(f,H_t)}(r)+o(T_f(r)).
\end{align}
It is also easy to see that
\begin{align}\label{2.10}
\begin{split}
N(r,\nu^{\infty}_{\frac{(f,H_t)}{(g,H_t)}})&+N(r,\nu^{\infty}_{\frac{(g,H_t)}{(f,H_t)}})=N(r,|\nu^0_{(f,H_t)}-\nu^0_{(g,H_t)}|)\\
&=N(r,\max\{\nu^0_{(f,H_t)},\nu^0_{(g,H_t)}\})-N(r,\min\{\nu^0_{(f,H_t)},\nu^0_{(g,H_t)}\})\\
&=N(r,\max\{\nu^0_{(f,H_t)},\nu^0_{(g,H_t)}\})+N(r,\min\{\nu^0_{(f,H_t)},\nu^0_{(g,H_t)}\})\\
&\ \ -2N(r,\min\{\nu^0_{(f,H_t)},\nu^0_{(g,H_t)}\})\\
&=N_{(f,H_t)}(r)+N_{(g,H_t)}(r)-2N(r,\min\{\nu^0_{(f,H_t)},\nu^0_{(g,H_t)}\}), \forall 1\le t\le 2n+2.
\end{split}
\end{align}
Therefore, by summing-up both sides of  (\ref{2.7}) and (\ref{2.8}) we get
\begin{align}\label{2.11}
\sum_{v=i,j}\bigl (\sum_{u=f,g}N_{(u,H_v)}(r)-2N(r,\min\{\nu^0_{(f,H_v)},\nu^0_{(g,H_v)}\})\bigl )\ge\sum_{u=f,g}\sum_{\underset{t\ne i,j}{t=1}}^{2n+2}N^{[1]}_{(u,H_t)}(r)+o(T_f(r)).
\end{align}
Since 
\begin{align}\label{2.12}
 T_u(r)\ge N_{(u,H_t)}(r),\ u=f,g,
\end{align}
the above inequality yields that
\begin{align}\label{2.13}
\begin{split}
||\ \sum_{u=f,g}2T_u(r)&\ge\sum_{u=f,g}\bigl (N_{(u,H_i)}(r)+N_{(u,H_j)}(r)\bigl )\\
&\ge \sum_{v=i,j}2N(r,\min\{\nu^0_{(f,H_v)},\nu^0_{(g,H_v)}\})+\sum_{u=f,g}\sum_{\underset{t\ne i,j}{t=1}}^{2n+2}N^{[1]}_{(u,H_t)}(r)+o(T_f(r)).
\end{split}
\end{align}
We also see that for all $z\in f^{-1}(H_t), v=i,j,$ 
\begin{align*}
\min\{\nu^0_{(f,H_v)}(z),\nu^0_{(g,H_v)}(z)\}\geqslant \min\{\nu^0_{(f,H_v)},n\}+\min\{\nu^0_{(g,H_v)}(z),n\}-n.
\end{align*}
This implies that
\begin{align}\label{2.14}
\begin{split}
N(r,\min\{\nu^0_{(f,H_v)},\nu^0_{(g,H_v)}\})&\ge \sum_{u=f,g} N^{[n]}_{(u,H_v)}(r)-nN^{[1]}_{(f,H_v)}(r)\\
&=\sum_{u=f,g} \bigl (N^{[n]}_{(u,H_v)}(r)-\dfrac{n}{2}N^{[1]}_{(u,H_v)}(r)\bigl ).
\end{split}
\end{align}
Combining inequalities (\ref{2.13}) and (\ref{2.14}), we have
\begin{align*}
||\ \sum_{u=f,g}2T_u(r)&\ge\sum_{v=i,j}\sum_{u=f,g} \bigl (2N^{[n]}_{(u,H_v)}(r)-N^{[1]}_{(u,H_v)}(r)\bigl )\\ 
& \ \ +\sum_{u=f,g}\sum_{\underset{t\ne i,j}{t=1}}^{2n+2}N^{[1]}_{(u,H_t)}(r)+o(T_f(r)).
\end{align*}
Summing-up both sides of the above inequality over all pair $(i,j), i\ne j$, and using the Second Main Theorem, we get
\begin{align*}
||\ \sum_{u=f,g}2T_u(r)&\ge\dfrac{2}{n+1}\sum_{u=f,g}\sum_{v=1}^{2n+2} \bigl (2N^{[n]}_{(u,H_v)}(r)+o(T_f(r))\\ 
& \sum_{u=f,g}\ge\dfrac{2(2n+2-n-1)}{n+1}T_u(r)+o(T_f(r))=\sum_{u=f,g}2T_u(r)+o(T_f(r)).
\end{align*}
The last equality yields that all inequalities (\ref{2.8}) (\ref{2.9}) and (\ref{2.12}-\ref{2.12}) become equalities outside a Borel set of finite measure. Summarizing all these ``\textit{equalities}", we obtain the following:
\begin{align}\label{2.15}
&||\ T_f(r)=N_{(f,H_i)}(r)+o(T_f(r)) \text{ and }||\ T_g(r)=N_{(g,H_i)}(r)+o(T_f(r))\  \text{(by (\ref{2.12}))},\\ 
\label{2.16}
&||\ \sum_{v=i,j}N(r,|\nu^0_{(f,H_v)}-\nu^0_{(g,H_v)}|)=\sum_{u=f,g}\sum_{\underset{t\ne i,j}{t=1}}^{2n+2}N^{[1]}_{(u,H_t)}(r)+o(T_f(r)) \text{(by (\ref{2.10}) and (\ref{2.11}))},\\ 
\label{2.17}
&||\ N(r,\min\{\nu^0_{(f,H_i)},\nu^0_{(g,H_i)}\})= \sum_{u=f,g} N^{[n]}_{(u,H_v)}(r)-nN^{[1]}_{(f,H_v)}(r)\ \text{(by (\ref{2.10}) and (\ref{2.11}))},
\end{align}
for every $i=1,...,2n+2.$ Then, equalities (\ref{2.15}) and (\ref{2.16}) prove the first assertion and the third assertion of the proposition.
Also the equality (\ref{2.16}) implies that 
$$ 
 ||\ \sum_{v=i,j}\bigl (N(r,|\nu^0_{(f,H_v)}-\nu^0_{(g,H_v)}|)+2N^{[1]}_{(h,H_v)}(r)\bigl )=\sum_{u=f,g}\sum_{t=1}^{2n+2}N^{[1]}_{(u,H_t)}(r)+o(T_f(r)) 
$$
holds for all $i,j\in\{1,...,2n+2\}$ and $h\in\{f,g\}$, it easily follows that
$$ 
||\  N(r,|\nu^0_{(f,H_i)}-\nu^0_{(g,H_i)}|)+2N^{[1]}_{(h,H_i)}(r)=\sum_{t=1}^{2n+2}N^{[1]}_{(h,H_t)}(r)+o(T_f(r)), 1\le i\le 2n+2,\ h\in\{f,g\}. 
$$
Then the second assertion is proved.

(iv). Without loss of generality, we may assume that $i_0=2n+2$. From the third assertion and the assumption that $f$ and $g$ share $H_{2n+2}$ with multiplicity truncated by level $n+1$, we have
\begin{align*}
||\ N^{[n+1]}_{(f,H_{2n+2})}(r)&\le N(r,\min\{\nu_{(f,H_{2n+2})},\nu_{(g,H_{2n+2})}\})\\
&=\sum_{u=f,g}N^{[n]}_{(u,H_{2n+2})}(r)-nN^{[1]}_{(g,H_{2n+2})}(r)+o(T_f(r))\\ 
&\le\sum_{u=f,g}N^{[n]}_{(u,H_{2n+2})}(r)-N^{[n]}_{(g,H_{2n+2})}(r)+o(T_f(r))\\
&=N^{[n]}_{(f,H_{2n+2})}(r)+o(T_f(r)).
\end{align*}
This yields that 
$$
  ||\ N^{[n+1]}_{(f,H_{2n+2})}(r)=N^{[n]}_{(f,H_{2n+2})}(r)+o(T_f(r))\text{ and }||\ N^{[n]}_{(u,H_{2n+2})}(r)=nN^{[1]}_{(g,H_{2n+2})}(r)+o(T_f(r)).
$$
It folows that 
$$ 
  \min\{\nu_{(f,H_{2n+2})},n+1\}=  \min\{\nu_{(f,H_{2n+2})},n\}\text{ and } \min\{\nu_{(g,H_{2n+2})},n\}= n\min\{\nu_{(f,H_{2n+2})},n1\}
$$
outside an analytic subset $S$ of counting function regardless of multiplicity is equal to $T_f(r)$. Therefore, 
$$ \nu_{(f,H_{2n+2})}(z)\le n\text{ and } \nu_{(g,H_{2n+2})}(z)\ge n\  \forall z\in f^{-1}(H_{2n+2})\setminus S.$$
Similarly, we have
$$ \nu_{(g,H_{2n+2})}(z)\le n\text{ and } \nu_{(f,H_{2n+2})}(z)\ge n $$
for all $z\in f^{-1}(H_{2n+2})$ outside an analytic subset $S'$ of counting function regardless of multiplicity is equal to $T_f(r)$. Then we have
$$ \nu_{(f,H_{2n+2})}(z)=\nu_{(g,H_{2n+2})}(z)=n $$
for all $z\in f^{-1}(H_{2n+2})\setminus (S\cup S')$. The fourth assertion is proved.
\end{proof}

\textbf{Proof of  Main Theorem.}\ Suppose that $f\times g$ is not algebraically degenerate. 
 Then by Lemma \ref{2.7}(ii)-(iv) and by the assumption, we have the following:
\begin{align*}
||\ 2 N^{[1]}_{(h,H_{2n+2})}(r)=\sum_{t=1}^{2n+2}N^{[1]}_{(h,H_{t})}(r)+o(T_f(r))\ h\in\{f,g\}.
\end{align*}
By the Second Main Theorem, it follows that
\begin{align*}
||\ T_h(r)\ge &N^{[1]}_{(h,H_{2n+2})}(r)=\sum_{\overset{t=1}{t\ne 2n+2}}^{2n+2}N^{[1]}_{(h,H_{t})}(r)+o(T_f(r))\\ 
& \ge\dfrac{1}{n}\sum_{\overset{t=1}{t\ne 2n+2}}^{2n+2}N^{[n]}_{(h,H_{t})}(r)+o(T_f(r))\\ 
&\ge\dfrac{2n+1-n-1}{n}T_h(r)+o(T_f(r))=T_h(r)+o(T_f(r))\\
\end{align*}
for each $h\in\{f,g\}$. Therefore, we easily obtain that
\begin{align*}
||\ T_h(r) &= N_{(h,H_{2n+2})}(r)+o(T_h(r))=N^{[n]}_{(h,H_{2n+2})}(r)+o(T_h(r))\\
&=N^{[1]}_{(h,H_{2n+2})}(r)+o(T_h(r)),\ \forall h\in\{f,g\}.
\end{align*}
Then, by Lemma \ref{2.7}(iii), we have
\begin{align*}
||\ T_h(r)&=N(r,\min\{\nu^0_{(f,H_i)},\nu^0_{(g,H_i)}\})= \sum_{u=f,g} N^{[n]}_{(u,H_v)}(r)-nN^{[1]}_{(h,H_v)}(r)+o(T_h(r))\\
&=2T_h(r)-nT_h(r)+o(T_h(r)),\ \forall h\in\{f,g\}.
\end{align*}
Letting $r\longrightarrow +\infty$, we get $n=1$. This is a contradiction to the assumption that $n\ge 2.$ Therefore, the supposition is impossible. Then the map $f\times g$ is algebraically degenerate.\hfill$\square$

\section{Two meromorphic mappings with two family of hyperplanes}
Let $f$ and $g$ be three distinct meromorphic mappings of $\C^m$ into $\P^n(\C)$. Let $\left\{H_i\right\}_{i=1}^{2n+2}$ and $\left\{G_i\right\}_{i=1}^{2n+2}$  be two families of hyperplanes of $\P^n(\C)$ in general position. Hyperplanes $H_i$ and $G_i$ are given by
\begin{align*}
H_i&=\{(\omega_0:\cdots :\omega_n)\ |\ \sum_{v=0}^na_{iv}\omega_v=0\}\\
\text{and }G_i&=\{(\omega_0:\cdots :\omega_n)\ |\ \sum_{v=0}^nb_{iv}\omega_v=0\}
\end{align*} 
respectively. Let $f=(f_0:\cdots :f_n)$ and $g=(g_0:\cdots :g_n)$ be reduced representations of $f$ and $g$ respectively. We set
$$ 
(f,H_i)=\sum_{v=0}^na_{iv}f_v\text{ and }(g,G_i)=\sum_{v=0}^nb_{iv}g_v.
$$ 
In this section, we will consider the case of two meromorphic mappings sharing two different families of hyperplanes as follows.

\begin{theorem}
Let $f,g,\left\{H_i\right\}_{i=1}^{2n+2}$ and $\left\{G_i\right\}_{i=1}^{2n+2}$ be as above. Assume that 

$\mathrm(a)$ $\dim f^{-1}(H_i)\cap f^{-1}(H_j)\leqslant m-2 \ \forall 1\leqslant i<j\leqslant 2n+2,$

$\mathrm(b)$ $f^{-1}(H_i)=g^{-1}(G_i),$ for $k=1,2,$ and $i=1,...,2n+1,$

$\mathrm(c)$ $\min\{\nu_{(f,H_{2n+2})},n+1\}=\min\{\nu_{(g,G_{2n+2})},n+1\}$,

$\mathrm(d)$ $\dfrac{(f,H_v)}{{(f,H_j)}}=\dfrac{(g,G_v)}{{(g,G_j)}}$ on $\bigcup_{i=1}^{2n+2}f^{-1}(H^0_i)\setminus f^{-1}(H^0_j),$ for $1\leqslant v,j\leqslant 2n+2.$

If $n\ge 2$ then the map $f\times g$ is algebraically degenerate.
\end{theorem}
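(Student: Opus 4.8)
The plan is to argue by contradiction and to transport the entire chain of Propositions \ref{2.2}--\ref{2.7} and the proof of the Main Theorem to the present two-family situation, with hypothesis (d) taking over the role played there by the equality $f=g$ on the shared inverse images. Suppose, then, that $f\times g$ is algebraically nondegenerate; in particular both $f$ and $g$ are linearly nondegenerate. Put
$$h_i=\frac{(f,H_i)/f_0}{(g,G_i)/g_0},\qquad P_{ij}=\frac{(f,H_i)}{(g,G_i)}\cdot\frac{(g,G_j)}{(f,H_j)}\qquad(1\le i,j\le 2n+2).$$
The elementary but decisive remark is that hypothesis (d) says precisely that $P_{ij}\equiv 1$ on $\bigcup_{t\ne i,j}f^{-1}(H_t)$, and this is the only place in the proof of Proposition \ref{2.7} where the equality $f=g$ on the shared inverse images was invoked.

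First I would collect the normalizations available for free. By (b) and (c) the zero divisors of $(f,H_i)$ and $(g,G_i)$ have the same support for every $i$, so $N^{[1]}_{(f,H_i)}(r)=N^{[1]}_{(g,G_i)}(r)$; inserting this into the Second Main Theorem (Theorem \ref{1.2}) applied to $f$ against $\{H_i\}$ and to $g$ against $\{G_i\}$ gives $||\,T_f(r)=O(T_g(r))$ and $||\,T_g(r)=O(T_f(r))$, exactly as in Proposition \ref{2.3}(a). Next, the identities $(f,H_i)=\frac{f_0h_i}{g_0}(g,G_i)$ force the vanishing of the $(2n+2)\times(2n+2)$ determinant
$$\det\bigl(a_{i0},\dots,a_{in},\,b_{i0}h_i,\dots,b_{in}h_i;\ 1\le i\le 2n+2\bigr)\equiv 0,$$
whose Laplace expansion along the first and last $n+1$ columns yields $\sum_{I\in\mathcal I}A_Ih_I\equiv 0$ with
$$A_I=\pm\det(a_{rl};\,r\in I^c,\,0\le l\le n)\cdot\det(b_{sl};\,s\in I,\,0\le l\le n).$$
Since both families are in general position, every minor, hence every $A_I$, is nonzero. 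Running the logarithmic-derivative and partition argument of Proposition \ref{2.3}(b) (Lemma \ref{1.3} together with Proposition \ref{1.4}) verbatim then produces the same dichotomy: either a ``Case 1'' relation, reduced after cancelling a factor $h_{i_0}$ to one over $n$-element index sets, or else $m(r,P_{ij})=o(T_f(r))$ for all $i,j$.

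Ruling out the first alternative is where the one genuinely new ingredient is needed: a two-family analogue of Proposition \ref{2.2}, to the effect that a nontrivial relation $\sum_I A_Ih_I\equiv 0$ over $n$-element index sets forces $f\times g$ to be algebraically degenerate. This rests in turn on a two-family analogue of Proposition \ref{2.1}, namely the birationality of the rational map
$$\Phi'(v,w)=\left(\frac{(v,H_1)}{(w,G_1)}:\cdots:\frac{(v,H_{2n+1})}{(w,G_{2n+1})}\right):\P^n(\C)\times\P^n(\C)\longrightarrow\P^{2n}(\C).$$
Granting that $\Phi'$ is birational, $\sum_I A_I\prod_{i\in I}\frac{(v,H_i)}{(w,G_i)}$ is a nonzero rational function, so
$$Q(v,w)=\sum_{I}A_I\Bigl(\prod_{i\in I}(v,H_i)\Bigr)\Bigl(\prod_{i\in I^c}(w,G_i)\Bigr)$$
is a nonzero polynomial with $Q(f_0,\dots,f_n,g_0,\dots,g_n)\equiv 0$, contradicting algebraic nondegeneracy; hence only the alternative $m(r,P_{ij})=o(T_f(r))$ survives. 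I expect the birationality of $\Phi'$ to be the main obstacle, since now the numerator hyperplanes $\{H_i\}$ and the denominator hyperplanes $\{G_i\}$ are unrelated; I would prove it by adapting Fujimoto's argument for Proposition \ref{2.1}, recovering the inverse from the two families separately and checking that general position of each family suffices for generic invertibility.

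With $m(r,P_{ij})=o(T_f(r))$ in hand, the proof of Proposition \ref{2.7} carries over word for word once ``$P_{ij}\equiv 1$ on $\bigcup_{t\ne i,j}f^{-1}(H_t)$'' is supplied by (d) in place of $f=g$ (the intersection condition for $g$ and $\{G_i\}$ required there follows from (a)--(c), since $g^{-1}(G_s)\cap g^{-1}(G_t)=f^{-1}(H_s)\cap f^{-1}(H_t)$). This produces the counting identities (i)--(iii), and hypothesis (c) furnishes the truncated-sharing input for assertion (iv) at the index $2n+2$. To finish I would repeat the computation in the proof of the Main Theorem: assertion (ii) at $i_0=2n+2$ gives
$$||\ 2N^{[1]}_{(h,H_{2n+2})}(r)=\sum_{t=1}^{2n+2}N^{[1]}_{(h,H_t)}(r)+o(T_f(r))\qquad(h\in\{f,g\}),$$
with $H_t$ replaced by $G_t$ when $h=g$; combined with the Second Main Theorem this forces $||\,T_h(r)=N^{[1]}_{(h,H_{2n+2})}(r)+o(T_h(r))$, whereupon assertion (iii) collapses to $T_h(r)=(2-n)T_h(r)+o(T_h(r))$, that is, $n=1$. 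This contradicts the hypothesis $n\ge 2$, and therefore $f\times g$ must be algebraically degenerate.
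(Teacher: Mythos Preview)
Your approach is correct in outline, but it takes a considerably more laborious route than the paper. The paper does not redo Propositions \ref{2.2}--\ref{2.7} in the two-family setting; instead it reduces to the one-family Main Theorem by showing the two families are projectively equivalent. Concretely, it lets $\mathcal{L}$ be the unique projective linear transformation of $\P^n(\C)$ with $\mathcal{L}(G_i)=H_i$ for $i=1,\dots,n+1$, and then uses hypothesis (d) to prove $\mathcal{L}(G_i)=H_i$ for \emph{all} $i$: if some $i_0>n+1$ failed, the auxiliary linear form $F=\sum_j(a'_{i_0j}-a_{i_0j})f_j$ (where $a'_{i_0j}$ are the coefficients of $\mathcal{L}(G_{i_0})$) would be nonzero yet vanish on $\bigcup_i f^{-1}(H_i)$, contradicting the Second Main Theorem. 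Once $\mathcal{L}(G_i)=H_i$ holds throughout, setting $\tilde g=\mathcal{L}\circ g$ places $f,\tilde g$ squarely in the hypotheses of the Main Theorem, and algebraic degeneracy of $f\times g$ follows since $f\times g$ and $f\times\tilde g$ differ by an automorphism of $\P^n(\C)\times\P^n(\C)$.

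What each approach buys: the paper's reduction is short and sidesteps entirely the two-family birationality of $\Phi'$ that you rightly identify as the main obstacle in your plan. Your direct argument, on the other hand, is more robust in principle---it would survive in situations where no single linear map carries one family to the other---but in the present theorem that extra generality is not needed, and the price is having to reprove (a variant of) Proposition \ref{2.1} from scratch.
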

\begin{proof}
We consider the linearly projective transformation $\mathcal L$ of $\P^n(\C)$ which is given by $\mathcal L((z_0:\cdots :z_n))=(\omega_0:\cdots :\omega_n)$ with
\begin{align*}
\left (
\begin{array}{ccc}
\omega_0\\ 
\vdots\\
\omega_n
\end{array}\right)
=\underbrace{\left (\begin{array}{ccc}
c_{10}&\cdots&c_{1n}\\ 
\vdots&\cdots&\vdots\\
c_{(n+1)0}&\cdots&c_{(n+1)n}
\end{array}\right)}_{C}
\left (
\begin{array}{ccc}
z_0\\ 
\vdots\\
z_n
\end{array}\right),
\end{align*}
where
$$ 
C={\underbrace{\left(
\begin{array}{ccc}
a_{10}&\cdots&a_{1n}\\ 
\vdots&\cdots&\vdots\\
a_{(n+1)0}&\cdots&a_{(n+1)n}
\end{array}
\right)}_{A}}^{-1}\cdot
\underbrace{\left (
\begin{array}{ccc}
b_{10}&\cdots&b_{1n}\\ 
\vdots&\cdots&\vdots\\
b_{(n+1)0}&\cdots&b_{(n+1)n}
\end{array}
\right)}_{B}
$$

We set 
$$
(a'_{i0},...,a'_{in})=(b_{i0},...,b_{in})\cdot C^{-1},\ \text{ for }i=1,..,2n+2.
$$
 Since $A\circ B=C,$ then
$$
(a'_{i0},...,a'_{in})=(a_{i0},...,a_{in}),\ \forall i=1,...,n+1.
$$ 
Suppose that there exists an index $i_0\in\{n+2,...,2n+2\}$ such that 
$$ 
(a'_{i0},...,a'_{in})\ne (a_{i0},...,a_{in}).
$$ 
We consider the following function
$$ 
F=\sum_{j=0}^{n}(a'_{i_0j}-a_{i_0j})f_j.
$$
Since $f$ is linearly nondegenerate, $F$ is a nonzero meromorphic function on $\C^m$. For $z\in\bigcup_{i=1}^{2n+2}f^{-1}(H_i)\setminus I(f^0)$,  without loss of generality we may assume that $(f,H_1)(z)\ne 0$, then 
\begin{align*}
F(z)=&\sum_{j=0}^{n}(a'_{i_0j}-a_{i_0j})f_j(z)=(a_{i_00},...,a_{i_0n})\cdot C^{-1}(f)(z)-(f,H_{i_0})(z)\\
=&(a_{i_00},...,a_{i_0n})\cdot B^{-1}\circ A(f)(z)-(f,H_{i_0})(z)\\
=&\dfrac{(a_{i_00},...,a_{i_0n})\cdot B^{-1}\circ A(f)(z)-(f,H_{i_0})(z)}{(f,H_1)(z)}\cdot (f,H_1)(z)\\
=&\dfrac{(a_{i_00},...,a_{i_0n})\cdot B^{-1}\circ B(f)(z)-(f,H_{i_0})(z)}{(f,H_1)(z)}\cdot (f,H_1)(z)\\
=&\dfrac{(a_{i_00},...,a_{i_0n})(f)(z)-(f,H_{i_0})(z)}{(f,H_1)(z)}\cdot (f,H_1)(z)\\
=&\dfrac{(f,H_{i_0})(z)-(f,H_{i_0})(z)}{(f,H_{1})(z)}\cdot (f,H_1)(z)=0.
\end{align*}
Therefore, it follows that
$$ 
N_{F}(r)\geqslant\sum_{i=1}^{2n+2}N_{(f,H_i)}^{[1]}(r).
$$
On the other hand, by Jensen formula we have that
$$ 
N_{F}(r)=\int\limits_{S(r)}\log |F(z)|\ \eta+O(1)\leqslant \int\limits_{S(r)}\log ||f(z)||\ \eta+O(1)= T_{f}(r)+o(T_{f}(r)).
$$
By using the Second Main Theorem, we obtain
\begin{align*} 
||\ (n+1)T_{f}(r)&\leqslant \sum_{i=1}^{2n+2}N_{(f,H_i)}^{[n]}(r)+o(T_{f}(r))\\
&\leqslant n\sum_{i=1}^{2n+2}N_{(f,H_i)}^{[1]}(r)+o(T_{f}(r))\leqslant nT_{f}(r).
\end{align*}
It implies that $||\ T_{f}(r)=0.$ This is a contradiction to the fact that $f$ is linearly nondegenerate. Therefore we have
$$
(a'_{i0},...,a'_{in})=(a_{i0},...,a_{in}),\ \forall i=1,...,2n+2.
$$ 
Hence $\mathcal L(G_i)=H_i$ for all $i=1,...,2n+2.$

We set $\tilde g=\mathcal L (g),\ k=1,2$. Then $f$ and $\tilde g$ share $\{H_1,..,H_{2n+1}\}$ ignoring multiplicity and share $H_{2n+2}$ with multiplicity truncated by level $n+1$. By Main Theorem, the map $f\times\tilde g$ is algebraically degenerate. We easily see that the map
$$
\begin{array}{cccc}
  \Psi: &\P^n(\C)\times \P^n(\C)&\longrightarrow&\P^n(\C)\times \P^n(\C)\\ 
&((\omega_0:\cdots :\omega_n)\times (z_0:\cdots :z_n))&\mapsto&((\omega_0:\cdots :\omega_n)\times \mathcal L^{-1}(z_0:\cdots :z_n))
\end{array}
$$
is an automorphism of $\P^n(\C)\times \P^n(\C)$. Therefore, the map $f\times g=\Phi (f\times\tilde g)$ is an algebraically degenerate mapping into $\P^n(\C)\times \P^n(\C)$. The theorem is proved.
\end{proof}


\begin{thebibliography}{99}

\bibitem{CY1} Z. Chen and Q. Yan, \textit{Uniqueness theorem of meromorphic mappings into $\P^N(\C)$ sharing $2N+3$ hyperplanes regardless of multiplicities}, Internat. J. Math., \textbf{20} (2009), 717-726.

\bibitem{CY2} Z. Chen and Q. Yan, \textit{A degeneracy theorem for meromorphic mappings with truncated multiplicities,} Acta Math. Scientia, \textbf{31} (2011), 549–560

\bibitem{DQT}G. Dethloff and S. D. Quang and T. V. Tan, \textit{A uniqueness theorem  for meromor-phic mappings with two families of hyperplanes}, Proc. Amer. Math. Soc., \textbf{140} (2012), 189-197.

\bibitem{Fu99} H. Fujimoto, \textit{Uniqueness problem with truncated multiplicities in value distribution theory II}, Nagoya Math. J., \textbf{{155}} (1999), 161-188.

\bibitem{G} G. Gundersen, \textit{Meromorphic functions that share four values},Trans. Amer. J. Math. {227} (1983), 545-567.

\bibitem{NO} J. Noguchi and T. Ochiai, \textit{Introduction to Geometric Function Theory in Several Complex Variables}, Trans. Math. Monogr. 80, Amer. Math. Soc., Providence, Rhode Island, 1990.

\bibitem{N} R. Nevanlinna, \textit{Einige Eideutigkeitss\"atze in der Theorie der meromorphen Funktionen}, Acta. Math.,
{48} (1926), 367-391.

\bibitem{SQ1} S. D. Quang, \textit{Unicity of meromorphic mappings sharing few hyperplanes}, Ann. Polon. Math., \textbf{102} No. 3 (2011), 255-270.

\bibitem{SQ2} S. D. Quang, \textit{A Finiteness theorem for meromorphic mappings with few hyperplanes}, Kodai Math. J., \textbf{35} (2012), 463-484.

\end{thebibliography}
\end{document}